\documentclass[11pt,a4paper]{amsart}
\usepackage{graphics,epic}
\usepackage{amsmath,amssymb, amsthm}
\usepackage[all,2cell,color]{xy}
\usepackage{xcolor}

\setcounter{tocdepth}{1}

\textwidth16cm \textheight24.2cm \headheight12pt
\oddsidemargin.4cm \evensidemargin.4cm \topmargin0cm

\newtheorem{theorem}{Theorem}[section]
\newtheorem*{theorem*}{Theorem}

\newtheorem{lemma}[theorem]{Lemma}
\newtheorem{proposition}[theorem]{Proposition}
\newtheorem{corollary}[theorem]{Corollary}

\newtheorem*{conjecture*}{Conjecture}
\theoremstyle{remark}

\newtheorem{example}[theorem]{Example}
\newtheorem{remark}[theorem]{Remark}

\newcommand{\ie}{{\em i.e.}\ }

\newcommand{\opname}[1]{\operatorname{\mathsf{#1}}}

\newcommand{\add}{\opname{add}\nolimits}

\renewcommand{\Im}{\opname{Im}\nolimits}

\newcommand{\rank}{\mathrm{rank}}

\newcommand{\rad}{\opname{rad}}

\newcommand{\ra}{\rightarrow}

\newcommand{\id}{\mathrm{id}}

%
%

\newcommand{\ten}{\otimes}
\newcommand{\lten}{\overset{\boldmath{L}}{\ten}}
\newcommand{\rhom}{\mathbf{R}\mathrm{Hom}}

%
%

\newcommand{\cd}{{\mathcal D}}

\newcommand{\ci}{{\mathcal I}}

\newcommand{\cs}{{\mathcal S}}

\numberwithin{equation}{section}

\setcounter{page}{1}

\begin{document}

\title[Failure of DJHP]{Examples of finite dimensional algebras which do not satisfy the derived Jordan--H\"older property}
\author{Qunhua Liu}
\address{Qunhua Liu, Institute of Mathematics, School of Mathematical Sciences, Nanjing Normal University, Nanjing 210023, P.R.China} \email{05402@njnu.edu.cn}
\author{Dong Yang}
\thanks{Correspondence to be sent to Dong Yang yangdong@nju.edu.cn.}
\address{Dong Yang, Department of Mathematics, Nanjing University, Nanjing 210093, P. R. China}
\email{yangdong@nju.edu.cn}

\date{Last modified on \today}

\begin{abstract} We construct a matrix algebra $\Lambda(A,B)$ from two given finite dimensional elementary algebras $A$ and $B$ and give some sufficient conditions on $A$ and $B$ under which the derived Jordan--H\"older property (DJHP) fails for $\Lambda(A,B)$. This provides finite dimensional algebras of finite global dimension which do not satisfy DJHP.\\
{\bf MSC 2010 classification}: 16E35, 18E30, 16G20\\
{\bf Keywords}: tensor algebra, derived category, stratifying ideal, derived Jordan--H\"older property.
\end{abstract}

\maketitle


\section{Introduction}

Recollements of triangulated categories, defined in \cite{BBD}, provide a tool of decomposing triangulated categories into smaller ones. They are analogues of short exact sequences of groups or modules.
In this paper we restrict our attention to recollements whose terms are all derived categories of algebras.
Derived simple algebras, analogous to simple groups or simple modules, are those algebras whose derived categories cannot be decomposed further via recollements. For example, local algebras are derived simple \cite{AKL2}. In analogy to composition series, a stratification of a derived category is a sequence of recollements which iteratively decomposes the given category into derived simple ones. It is natural to ask for a derived version of Jordan--H\"older property (DJHP for short), that is, the existence and uniqueness of finite stratifications of derived categories.  It was shown that hereditary Artin algebras \cite{AKL2}, finite dimensional piecewise hereditary algebras \cite{AKL3}, commutative Noetherian rings \cite{AKLY2} and derived discrete algebras \cite{Q} satisfy DJHP.

DJHP fails for general algebras by results of Chen and Xi \cite{CX12, CX15}. The first finite dimensional example was proposed also by Chen and Xi. Another finite dimensional example was given in \cite[Example 7.6]{AKLY2}. 
The aim of this paper is to generalise this example to obtain more finite dimensional algebras which do not satisfy DJHP. More precisely,  given two finite dimensional elementary algebras $A$ and $B$, we construct a $2\times 2$-matrix algebra $\Lambda(A,B)$ in Section~\ref{ss:property}. Here a finite-dimensional algebra is said to be \emph{elementary} if all its simple modules are 1-dimensional. Our main result is:
\begin{theorem*}[\ref{t:maintheorem}] DJHP fails for $\Lambda(A,B)$ in the following two cases:
\begin{itemize}
\item[(1)] both $A$ and $B$ are derived simple, but neither is simple;

\item[(2)] one of $A$ and $B$ is derived simple but nonsimple and the other one is of infinite global dimension.
\end{itemize} 
\end{theorem*}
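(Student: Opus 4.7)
I would prove DJHP fails for $\Lambda := \Lambda(A,B)$ by exhibiting two stratifications of $\cd(\Lambda)$ whose multisets of derived simple factors are non-equivalent (either in cardinality or in the derived-equivalence classes of the factors).

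The first stratification is the natural one coming from the block matrix structure of $\Lambda(A,B)$. There is a canonical idempotent $e \in \Lambda$ along the diagonal such that $e\Lambda e$ is one of $A, B$ and $\Lambda/\Lambda e \Lambda$ is the other. My first task would be to verify that $\Lambda e \Lambda$ is a stratifying ideal, which amounts to a Tor-vanishing check involving the off-diagonal bimodule used in the definition of $\Lambda(A,B)$; this yields a recollement with outer terms $\cd(A)$ and $\cd(B)$. In case (1), since $A$ and $B$ are both derived simple, this already is a complete stratification of length two with derived simple factors $\{A,B\}$. In case (2), the derived simple side contributes directly, while the infinite-global-dimension side must be either refined further or shown to obstruct a stratification at that slot.

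For the second stratification, I would work with a primitive idempotent $f \in \Lambda$ corresponding to a single vertex of the underlying quiver of $A$ or $B$. After checking (again by a homological computation tailored to the bimodule in the construction) that $\Lambda f \Lambda$ is stratifying, one gets a recollement in which the corner $f\Lambda f$ is local, hence derived simple, and the quotient $\Lambda/\Lambda f \Lambda$ is a strictly smaller finite dimensional algebra. Iterating this on the quotient produces a complete stratification of $\cd(\Lambda)$ whose derived simple factors include at least one local algebra. Since derived equivalent algebras share their number of isomorphism classes of simple modules, and since $A, B$ are nonsimple in both cases of the theorem, this factor list cannot be matched against $\{A, B\}$ up to derived equivalence; in case (1) the two stratifications are therefore inequivalent. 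In case (2), the natural stratification is blocked on the infinite-global-dimension side, so any completion of it must pass through further stratifications of that side; combining this with the local-idempotent stratification from above again forces non-uniqueness of the factor list.

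The main obstacle I expect is the verification that the two candidate ideals $\Lambda e \Lambda$ and $\Lambda f \Lambda$ are stratifying: this is the real homological content, and the Tor-vanishing estimates depend sensitively on the construction of $\Lambda(A,B)$. In case (2) the difficulty is compounded because one of $A,B$ has infinite global dimension, so one must argue carefully that the alternative (finer) stratification terminates in genuinely derived simple factors, and that no repackaging of the natural stratification can recover a list equivalent to the local-idempotent one. A secondary technical point is to confirm that the iterated quotients produced in the second stratification admit the required further stratifying idempotents and do not obstruct finite termination.
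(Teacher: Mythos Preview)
Your proposal rests on a misreading of the algebra $\Lambda(A,B)$. You assume there is a diagonal idempotent $e$ with $e\Lambda e$ equal to $A$ and $\Lambda/\Lambda e\Lambda$ equal to $B$ (or vice versa), as if $\Lambda$ were an ordinary triangular matrix ring $\begin{pmatrix} A & M\\ 0 & B\end{pmatrix}$. It is not. By construction the diagonal corners are $e_1\Lambda e_1 = A\otimes_0 B$ (the trivially twisted tensor product) and $e_2\Lambda e_2 = \bar A\otimes B$, while the quotients are $\Lambda/\Lambda e_1\Lambda\cong \bar A\otimes\bar B$ and $\Lambda/\Lambda e_2\Lambda\cong A\otimes\bar B$. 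In particular $\rank(\Lambda)=2\rank(A)\rank(B)$, not $\rank(A)+\rank(B)$, so there is no recollement of $\cd(\Lambda)$ with outer terms $\cd(A)$ and $\cd(B)$ alone. Thus your ``first stratification'' does not exist as described, and the comparison of factor lists $\{A,B\}$ versus something containing a local algebra collapses.

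What the paper actually does is compare the two stratifying recollements induced by $e_1$ and $e_2$. The recollement $(R2)$ has outer terms $A\otimes\bar B\cong A^{\rank(B)}$ and $\bar A\otimes B\cong B^{\rank(A)}$, so any stratification through $(R2)$ has factors consisting of $\rank(B)$ copies of $A$ together with simple factors of $\rank(A)$ copies of $\cd(B)$. The recollement $(R1)$ has outer terms $\bar A\otimes\bar B\cong k^{\rank(A)\rank(B)}$ and $A\otimes_0 B$, so any stratification through $(R1)$ contains at least $\rank(A)\rank(B)$ copies of $k$. The invariant used to separate the two is simply the multiplicity of $k$ among the simple factors: in case (1) no copy of $k$ appears via $(R2)$ since $A,B$ are derived simple nonsimple; in case (2) fewer than $\rank(A)\rank(B)$ copies of $k$ can appear via $(R2)$ because the infinite-global-dimension side cannot stratify entirely into $k$'s (Proposition~\ref{prop:invariants}). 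Your idea of iterating primitive idempotents and your remark that a stratification is ``blocked'' on the infinite-global-dimension side are both unnecessary and, in the latter case, incorrect: finite stratifications always exist for finite dimensional algebras.
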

As consequences of this result, we have
\begin{itemize}
\item[(a)] There are finite dimensional algebras of finite global dimension which do not satisfy DJHP. Indeed, if $A$ and $B$ are finite dimensional derived simple elementary algebras of positive finite global dimension, \emph{e.g.} Fibonacci algebras \cite{H,Membrillo-Hernandez94} and their generalisations \cite{LY16}, then $\Lambda(A,B)$ has finite global dimension and does not satisfy DJHP (Corollary~\ref{cor:finite-global-dimension}).

\item[(b)] Fix a local nonsimple finite dimensional elementary algebra $A$. Then for any finite dimensional elementary algebra $B$ applying the construction $\Lambda(?):=\Lambda(A,?)$ twice yields a finite dimensional algebra $\Lambda^2(B)$ which does not satisfy DJHP  (Corollary~\ref{cor:cardinality}).
\end{itemize}

During the preparation of this paper, Martin Kalck informed us that he found a family of finite dimensional algebras of global dimension 2 (hence quasi-hereditary) for which DJHP fails (see \cite[Proposition 3.4]{K}); Changchang Xi informed us that \cite[Theorem 1.1]{CX} can be used to construct finite dimensional algebras which do not satisfy DJHP. For two finite dimensional elementary algebras with the same number of isomorphism classes of simple modules, Chen and Xi constructed in the end of \cite[Section 5]{CX} an upper triangular $2\times 2$-matrix algebra, for which results similar to Theorem~\ref{t:maintheorem}, (a) and (b) can be obtained.

The paper is organised as follows: In Section \ref{s:preliminaries} we collect necessary definitions on recollements, stratifications and derived Jordan--H\"older property for later use. Section \ref{s:main result} contains our main result. We give the construction of the matrix algebra $\Lambda(A,B)$ and study its derived Jordan--H\"older property using stratifying ideals. In Section \ref{s:quiver-relation} we provide the underlying quiver and relations of $\Lambda(A,B)$.

Throughout the paper $k$ is a field, all algebras are finite dimensional algebras over $k$,  and all tensor products are over $k$ unless otherwise stated. For an algebra $A$, we denote by $\mathcal{D}(A)$ the unbounded derived category of right $A$-modules. For a quiver $Q$, we denote by $Q_0$ and $Q_1$ the sets of vertices and the set of arrows respectively. For an arrow $\alpha\in Q_1$, we denote by $\mathrm{s}(\alpha)$ and $\mathrm{t}(\alpha)$ the source and target of $\alpha$ respectively.

\medskip
\noindent{\it Acknowledgement.} The first-named author acknowledges support by National Natural Science Foundation of China No. 11671207, and by RFDP of Higher Education of China 20133207120013. 
The second-named author acknowledges support by National Natural Science Foundation of China No. 11671207.


\section{Preliminaries} \label{s:preliminaries}

In this section we recall the definition of recollements, stratifying ideals and stratifying recollements, stratifications, derived simplicity and derived Jordan--H\"older property of algebras.

\subsection{Recollements}
Let $A,B,C$ be algebras.
A \emph{recollement}~\cite{BBD} of
$\cd(A)$ by $\cd(B)$ and $\cd(C)$ is a diagram
\begin{eqnarray*}\xymatrix@!=6pc{\mathcal{D}(B)
\ar[r]|{i_*=i_!} &\mathcal{D}(A) \ar@<+2.5ex>[l]^{i^!}
\ar@<-2.5ex>[l]_{i^*} \ar[r]|{j^!=j^*} &
\mathcal{D}(C)\ar@<+2.5ex>[l]^{j_*}
\ar@<-2.5ex>[l]_{j_!}}
\end{eqnarray*}
of
$k$-linear triangle functors such that
\begin{enumerate}
\item $(i^\ast,i_\ast)$,\,$(i_!,i^!)$,\,$(j_!,j^!)$ ,\,$(j^\ast,j_\ast)$
are adjoint pairs;

\item  $i_\ast,\,j_\ast,\,j_!$  are full embeddings;

\item  $i^!\circ j_\ast=0$ (and thus also $j^!\circ i_!=0$ and
$i^\ast\circ j_!=0$);

\item  for each $M\in \mathcal{D}(A)$ there are triangles
\[
\xymatrix@R=0.3pc{
i_! i^!(M)\ar[r]& M\ar[r]& j_\ast j^\ast (M)\ar[r]& i_!i^!(M)[1]\\
j_! j^! (M)\ar[r]& M\ar[r]& i_\ast i^\ast(M)\ar[r]& j_!j^!(M)[1]
}
\]
where the first two maps in each triangle are given by adjunctions.
\end{enumerate}

We say the recollement is {\it trivial} if one of the algebras $B$ and $C$ is trivial, or equivalently, one
of the full embeddings $i_*$, $j_!$ and $j_*$ is a triangle
equivalence.

For an algebra $A$, we denote by $\rank(A)$ the number of nonisomorphic simple $A$-modules and call it the \emph{rank} of $A$.

\begin{proposition}\label{prop:invariants}
 Let $A,B,C$ be algebras and assume that there is a recollement of $\cd(A)$ by $\cd(B)$ and $\cd(C)$. Then
 \begin{itemize}
 \item[(i)] {\rm (\cite[Proposition 2.14]{AKLY2})}
 $A$ is of finite global dimension if and only if so are $B$ and $C$;
 \item[(ii)] {\rm (\cite[Proposition 6.5]{AKLY2})}
 $\rank(A)=\rank(B)+\rank(C)$.
 \end{itemize}
\end{proposition}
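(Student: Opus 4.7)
I would treat the two parts separately, as both are structural results about how invariants behave under recollements of unbounded derived categories of finite-dimensional algebras.

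For part~(ii), my plan is to rely on the identification $\rank(A)=\mathrm{rank}_\Z K_0(\cd^b(\mod A))$, since the classes of the simple modules form a $\Z$-basis of the Grothendieck group. Once one checks that the six recollement functors preserve the subcategories $\cd^b(\mod(-))$, the recollement restricts to a semi-orthogonal decomposition at the bounded level, and additivity of $K_0$ then delivers $\rank(A)=\rank(B)+\rank(C)$. Equivalently, and more transparently, I would argue directly that every simple $A$-module $S$ falls into exactly one of two families: those in the essential image of $i_*$ (which, using that $i_*$ is fully faithful with $i^*i_*=\id$, correspond bijectively to simple $B$-modules) and those with $j^*(S)\neq 0$ (which correspond bijectively to simple $C$-modules via a top construction applied to $j^*(S)$). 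Disjointness and exhaustiveness follow from $j^!i_*=0$ and the triangles $j_!j^!M\to M\to i_*i^*M$ applied to simple modules.

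For part~(i), I would use the characterization that a finite-dimensional algebra $A$ has finite global dimension iff $\cd^b(\mod A)\subseteq\per(A)$, equivalently every simple $A$-module has a finite projective resolution. For the direction ``$\Leftarrow$'', take a simple $A$-module $S$ and apply the recollement triangle $j_!j^!(S)\to S\to i_*i^*(S)$. Assuming the relevant six functors preserve bounded, finite-dimensional cohomology, the finite global dimensions of $B$ and $C$ place $j^!(S)$ in $\per(C)$ and $i^*(S)$ in $\per(B)$; since $j_!$ (and, one argues, $i_*$) carries perfect complexes to perfect complexes, both outer terms lie in $\per(A)$, forcing $S\in\per(A)$. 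For ``$\Rightarrow$'', a simple $B$-module $T$ satisfies $i_*(T)\in\mod A\subseteq\per(A)$, and applying $i^*$ together with $i^*i_*=\id$ transports a finite projective resolution of $i_*(T)$ in $\cd(A)$ to one of $T$ in $\cd(B)$; a symmetric argument using $j^*j_!=\id$ handles $C$.

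The main obstacle, common to both parts, is technical: checking that the relevant functors among $i^*,i_*,i^!,j_!,j^*,j_*$ carry the subcategory of complexes with bounded, finite-dimensional cohomology to itself, and that the appropriate ones among them preserve perfect complexes. These preservation properties are not automatic for an abstract recollement of unbounded derived categories, and the bulk of the work in a careful proof is devoted to them. Once these are secured, the Grothendieck-group bookkeeping of~(ii) and the two-out-of-three property for $\per$ in the triangles of~(i) close out the arguments.
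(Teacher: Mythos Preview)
The paper does not supply its own proof of this proposition: both parts are quoted verbatim from \cite{AKLY2} (Propositions~2.14 and~6.5 there), so there is no in-paper argument to compare your proposal against.

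That said, your outline is broadly in the right spirit, and you have correctly located the genuine difficulty. A few remarks on where the sketch is solid and where it would need more care:

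For~(ii), the Grothendieck-group argument is the standard route and is essentially what is carried out in \cite{AKLY2}. Your alternative bijection-of-simples description is less robust: for a general recollement there is no reason the essential image of $i_*$ restricted to simple $B$-modules should consist of simple $A$-modules, nor that the ``top of $j^*(S)$'' construction is well-defined or bijective. The $K_0$ argument is the one to keep.

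For~(i), the direction ``$\Rightarrow$'' is fine for $B$ via $i^*$ (which preserves compacts because its right adjoint $i_*$ preserves coproducts), but your symmetric claim for $C$ via $j^*$ is not automatic: $j^*$ preserves compacts only if $j_*$ preserves coproducts, which need not hold. In the direction ``$\Leftarrow$'', the step ``one argues $i_*$ carries perfect complexes to perfect complexes'' is precisely the nontrivial point, and in general it is \emph{false} for an arbitrary recollement; it becomes available only after one knows something extra (in \cite{AKLY2} this is handled via the machinery of ladders, which lets one extend the recollement one step and thereby gain the missing preservation property). So your final paragraph is accurate: the preservation statements are not bookkeeping but the actual content of the proof, and your sketch stops short of supplying them.
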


\subsection{Stratifying ideals and stratifying recollements}
\label{ss:stratifying-recollements}

Let $A$ be an algebra and $e\in A$ an idempotent. Following \cite[Definition 2.1.1]{CPS} we call the idempotent ideal $AeA$ a {\it stratifying ideal} of $A$ if the natural morphism $Ae\lten_{eAe}eA\to AeA$ is an isomorphism in $\cd(A)$. If $AeA$ is stratifying, then there exists a recollement of $\cd (A)$ by $\cd (A/AeA)$ and $\cd(eAe)$ (see for example \cite[Example 4.5]{AKL1}) 
\[ \label{d:recollement}
\xy (-46,0)*{\mathcal{D}(A/AeA)}; {\ar (-19,3)*{}; (-35,3)*{}_{i^\ast}};
{\ar (-35,0)*{}; (-19,0)*{}|{i_\ast=i_!}}; {\ar(-19,-3)*{}; (-35,-3)*{}^{i^!}};
(-12,0)*{{\mathcal D}(A)}; {\ar (11,3)*{}; (-5,3)*{}_{j_!}};
{\ar (-5,0)*{}; (11,0)*{}|{j^*=j^!}}; {\ar (11,-3)*{}; (-5,-3)^{j_*}};
(20,0)*{{\mathcal{D}(eAe)}};
\endxy
\]
where the six functors are derived functors given by
\[
\begin{array}{c}
i^*=-\lten_A A/AeA,\ \ i^!=\rhom_A(A/AeA,-),\\
i_*=\rhom_{A/AeA}(A/AeA,-)=-\lten_{A/AeA}A/AeA=i_!,\\
j_!=-\lten_{eAe} eA,\ \  j_*=\rhom_{eAe}(Ae,-), \\
j^!=\rhom_A(eA,-)=-\lten_A Ae=j^*.
\end{array}
\]
We refer to it as the {\it stratifying recollement} of $\mathcal{D}(A)$ induced by $e$. The following lemma is a left-module version of {\cite[Lemma 2.12]{AKLY2}}.

\begin{lemma}[] 
Let $A$ be an algebra and $e\in A$ an idempotent. Assume that $A/AeA$, as a left $A$-module, admits a projective resolution with components in $\add(Ae)$ except in degree $0$. Then $AeA$ is a stratifying ideal of $A$. \label{l:criterion for stratifying ideal}
\end{lemma}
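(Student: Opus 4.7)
The plan is to show that the canonical morphism $\mu\colon Ae \lten_{eAe} eA \to AeA$ in $\cd(A)$ is an isomorphism by showing its cone $C$ vanishes, for which I would verify $C \lten_A Ae = 0$ and $C \lten_A A/AeA = 0$ and then invoke a joint conservativity argument.

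From the hypothesized resolution $P^\bullet \to A/AeA$ I would first derive two Tor vanishings. Since $eA$ is projective as a right $A$-module (a summand of $A$), $eA \otimes_A -$ is exact and $eA \otimes_A M = eM$, so $eA \otimes_A P^\bullet = eP^\bullet \simeq e(A/AeA) = 0$ (using $eAeA = eA$), giving $eA \lten_A A/AeA = 0$. Since $(A/AeA)\cdot e = 0$ annihilates $\add(Ae)$, one has $A/AeA \otimes_A P^{-i} = 0$ for $i \geq 1$; combined with the standard identity $A/AeA \otimes_A A/AeA = A/AeA$ (as $A/AeA$ is a quotient algebra), this yields $A/AeA \lten_A A/AeA = A/AeA$, with the natural comparison map being the identity on $H^0$. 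Now by associativity of the derived tensor and $eA \lten_A Ae = eAe$ (projectivity of $Ae$), $(Ae \lten_{eAe} eA) \lten_A Ae = Ae \lten_{eAe} eAe = Ae$ and $AeA \lten_A Ae = Ae$ (using $AeAe = Ae$ and projectivity of $Ae$), and tracing through the identifications shows $\mu \lten_A Ae$ is the identity of $Ae$, so $C \lten_A Ae = 0$. Similarly $(Ae \lten_{eAe} eA) \lten_A A/AeA = Ae \lten_{eAe} 0 = 0$; and tensoring the short exact sequence $0 \to AeA \to A \to A/AeA \to 0$ with $A/AeA$ over $A$ yields a triangle $AeA \lten_A A/AeA \to A/AeA \to A/AeA \lten_A A/AeA$ whose right-hand map is identified with the identity via the second Tor vanishing, hence $AeA \lten_A A/AeA = 0$ and therefore $C \lten_A A/AeA = 0$.

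To conclude $C = 0$ I would invoke joint conservativity. Underived version: for a right $A$-module $M$, if $Me = 0$ and $M/M\cdot AeA = 0$ then $M = M\cdot AeA = M\cdot A\cdot eA = M\cdot eA \subseteq (Me)\cdot A = 0$. Since $C$ has cohomology in degrees $\leq 0$ (both terms of the defining triangle do), it is bounded above; if $C \neq 0$, let $N$ be the largest degree with $H^N(C) \neq 0$. Exactness of $- \otimes_A Ae$ gives $H^N(C) \cdot e = H^N(C\lten_A Ae) = 0$, and the hyperhomology spectral sequence $E_2^{p,q} = \mathrm{Tor}^A_{-q}(H^p(C), A/AeA) \Rightarrow H^{p+q}(C \lten_A A/AeA)$ collapses at the edge $p+q = N$ (no incoming differentials since negative Tors vanish, no outgoing since $H^{>N}(C) = 0$), identifying $H^N(C \lten_A A/AeA) = H^N(C) \otimes_A A/AeA = 0$. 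The underived statement then forces $H^N(C) = 0$, contradicting maximality. The main obstacle is really this last step: upgrading the easy underived joint conservativity to the derived level via the spectral-sequence edge argument at the top cohomology of $C$.
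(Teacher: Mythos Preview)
The paper does not actually prove this lemma: it is stated as ``a left-module version of \cite[Lemma 2.12]{AKLY2}'' and no argument is given. So there is no paper proof to compare against; your proposal supplies a direct argument where the paper simply cites the literature.

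Your argument is correct. The two Tor computations are clean: $eA\lten_A A/AeA=0$ follows immediately from projectivity of $eA$ and $e(A/AeA)=0$, and for $A/AeA\lten_A A/AeA\cong A/AeA$ you correctly use the hypothesis on the resolution $P^\bullet$ to kill $A/AeA\otimes_A P^{-i}$ for $i\geq 1$. The identifications $\mu\lten_A Ae=\id_{Ae}$ and $AeA\lten_A A/AeA=0$ (via the triangle coming from $0\to AeA\to A\to A/AeA\to 0$) are handled properly. For the conservativity step, your spectral-sequence edge argument is sound: since $C$ is bounded above (both $Ae\lten_{eAe}eA$ and $AeA$ lie in nonpositive cohomological degrees), the hyperhomology spectral sequence $E_2^{p,q}=\Tor^A_{-q}(H^p(C),A/AeA)\Rightarrow H^{p+q}(C\lten_A A/AeA)$ converges, and the vanishing of $E_2^{p,q}$ for $q>0$ (negative Tor) and for $p>N$ (maximality of $N$) isolates $E_\infty^{N,0}=H^N(C)\otimes_A A/AeA$ as the only contribution in total degree $N$. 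Combined with $H^N(C)e=0$ from the exactness of $-\otimes_A Ae$, the elementary observation $M=M\cdot AeA\subseteq (Me)A$ forces $H^N(C)=0$, giving the contradiction. One small point worth making explicit is that the double complex underlying the spectral sequence has finitely many nonzero terms on each antidiagonal (from $p\leq N'$ and $q\leq 0$ one gets $n\leq p\leq N'$ for total degree $n$), so convergence is unproblematic.
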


\subsection{Derived simplicity of algebras}

An algebra $A$ is said to be {\em derived simple} if there is no nontrivial recollement of $\cd(A)$ by derived categories of algebras.
Known examples include  local algebras \cite[Proposition 4.11]{AKL2}, Fibonacci algebras  \cite{H,Membrillo-Hernandez94} and their generalisations \cite{LY16}, blocks of group algebras of finite groups and indecomposable symmetric algebras of
finite representation type \cite{LY12}, and indecomposable self-injective algebras of rank 2 with nonpositive Cartan determinant \cite[Remark 5]{HanQin16}. 

\subsection{Stratifications and the derived Jordan--H\"older property}

Roughly speaking, a
\emph{stratification} is a way of breaking up a given derived
category into simple pieces using recollements. More rigorously, for an algebra $A$ a
stratification of $\cd(A)$ is a full rooted binary tree whose root is the given
derived category $\cd(A)$, whose nodes are derived categories of algebras and whose
leaves are derived categories of derived simple algebras such that a node together with its two child
nodes form a nontrivial recollement unless it is a leaf.  The leaves are called the \emph{simple factors} of the
stratification. By abuse of language, we will also call the corresponding derived simple algebras the simple factors of the stratification. If we allow trivial recollements in the above definition, we call the binary tree a \emph{pseudostratification}. Given a pseudostratification, we obtain a stratification by deleting the subtrees with trivial nodes and contracting the edges which are equivalences.

\begin{lemma}\label{lem:splitting-stratification}
Let $A$ be an algebra with a decomposition $A=A_1\times\cdots\times A_s$ into the direct product of algebras $A_1,\ldots,A_s$. Then any stratification of $\cd(A)$ is the direct sum of pseudostratifications of $\cd(A_1),\ldots,\cd(A_s)$.
\end{lemma}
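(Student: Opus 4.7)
The strategy is to proceed by induction on the number of internal nodes of the given stratification of $\cd(A)$, with the essential structural input being the following claim about a single recollement.

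\textbf{Claim.} For $A=A_1\times\cdots\times A_s$ and any (possibly trivial) recollement $(\cd(B),\cd(A),\cd(C))$, there are compatible algebra decompositions $B\cong B_1\times\cdots\times B_s$ and $C\cong C_1\times\cdots\times C_s$ (with some factors possibly zero) and recollements $(\cd(B_i),\cd(A_i),\cd(C_i))$, such that the given recollement is the ``product'' of these componentwise.

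To prove the claim, I exploit the central idempotents $e_1,\ldots,e_s\in Z(A)$ coming from the product decomposition of $A$. Their action on complexes yields a functorial splitting $\cd(A)=\cd(A_1)\oplus\cdots\oplus\cd(A_s)$ into mutually Hom-orthogonal triangulated subcategories. I then transport this splitting across the recollement using the standard equalities $\Im(i_*)=\Ker(j^*)$ and $\Im(j_*)=\Ker(i^!)$: for $M\in\Im(i_*)$ each summand $Me_i$ has $j^*(Me_i)$ a summand of $j^*(M)=0$, so $Me_i\in\Im(i_*)\cap\cd(A_i)$, giving an orthogonal splitting $\cd(B)\simeq\Im(i_*)=\bigoplus_i\cd(B)_i$; the dual argument with $i^!$ gives $\cd(C)=\bigoplus_i\cd(C)_i$.

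The next task is to upgrade the triangulated splitting of $\cd(B)$ to an actual algebra decomposition of $B$. Applying the splitting to the compact generator $B\in\cd(B)$ yields $B=B_1\oplus\cdots\oplus B_s$ with $B_i\in\cd(B)_i$; since $B$ is concentrated in degree $0$ so is each summand, hence $B_i=f_iB$ for an idempotent $f_i\in B$ with $\sum_if_i=1$. The orthogonality $\Hom_{\cd(B)}(B_i,B_j)=\Hom_B(f_iB,f_jB)=f_jBf_i=0$ for $i\neq j$ implies, via the standard computation $f_ib=\sum_jf_ibf_j=f_ibf_i=\sum_jf_jbf_i=bf_i$, that each $f_i$ is central in $B$. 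Hence $B\cong f_1B\times\cdots\times f_sB$ is an algebra product, and $\cd(B)_i\simeq\cd(B_i)$ with $B_i=f_iB$ as compact generator of endomorphism ring $B_i$. The same argument works for $C$. That the six recollement functors restrict componentwise to recollements $(\cd(B_i),\cd(A_i),\cd(C_i))$ then follows from orthogonality: for instance, $i^*$ carries $\cd(A_i)$ into $\cd(B_i)$, because for any $X\in\cd(A_i)$ and $Y\in\cd(B_j)$ with $j\neq i$, the adjunction gives $\Hom(i^*X,Y)=\Hom(X,i_*Y)=0$ by orthogonality of $\cd(A_i)$ and $\cd(A_j)$; the recollement axioms are inherited termwise from the original.

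Given the claim, the lemma follows by induction on the number of internal nodes of the stratification of $\cd(A)$. In the base case (a single leaf) $\cd(A)$ is derived simple; since any nontrivial product decomposition $A=A_1\times(A_2\times\cdots\times A_s)$ with $s\geq 2$ would produce a nontrivial recollement, we must have $s=1$ and the statement is trivial. For the inductive step I apply the claim at the root, then the inductive hypothesis to the two child subtrees (stratifications of $\cd(B)$ and $\cd(C)$, whose algebras have been shown to split compatibly with $A$'s decomposition), obtaining pseudostratifications of each $\cd(B_i)$ and each $\cd(C_i)$. Splicing them via the recollements $(\cd(B_i),\cd(A_i),\cd(C_i))$---which become trivial whenever $B_i$ or $C_i$ is zero---yields pseudostratifications of each $\cd(A_i)$ whose direct sum recovers the original stratification. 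The main obstacle I expect is precisely the passage from a triangulated splitting of $\cd(B)$ to an algebra-level product decomposition of $B$, since this is where the hypothesis that every node of the stratification is the derived category of an \emph{algebra} (and not some arbitrary triangulated category) is genuinely needed: pairwise Hom-orthogonality of summands of the compact generator $B$ forces the corresponding idempotents to be central.
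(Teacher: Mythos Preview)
Your proposal is correct and follows essentially the same approach as the paper: both arguments reduce to the \textbf{Claim} that a single recollement of $\cd(A)$ splits along the product decomposition $A=A_1\times\cdots\times A_s$ into componentwise recollements $(\cd(B_i),\cd(A_i),\cd(C_i))$, and then finish by induction on the stratification tree. The paper simply invokes \cite[Lemma 2.3]{AKLY2} for the Claim, whereas you give a self-contained proof via central idempotents and Hom-orthogonality; your argument for promoting the triangulated splitting of $\cd(B)$ to an algebra product decomposition (forcing the idempotents $f_i$ to be central) is correct and is exactly the substance behind that cited lemma. One minor wording issue: in your base case you conclude ``we must have $s=1$'', but the lemma does not assume the $A_i$ are nonzero; the accurate statement is that at most one $A_i$ is nontrivial, and the remaining pseudostratifications are of the zero category --- this does not affect the argument.
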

\begin{proof}
Let $B$ and $C$ be algebras such that there is a recollement of $\cd(A)$ by $\cd(B)$ and $\cd(C)$. It follows from \cite[Lemma 2.3]{AKLY2} that there are algebras $B_1,\ldots,B_s$, $C_1,\ldots,C_s$ (some of which may be trivial) such that $B=B_1\times\cdots\times B_s$ and $C=C_1\times\cdots\times C_s$ and there are recollements of $\cd(A_i)$ by $\cd(B_i)$ and $\cd(C_i)$, $i=1,\ldots,s$, which sum up to the given recollement. The desired result follows by induction.
\end{proof}

Analogous to the classical Jordan--H\"older property of finite groups, the derived Jordan--H\"older property of an algebra is about the existence and uniqueness of  finite stratifications. Let $A$ be an algebra. According to \cite[Corollary 7.4]{AKLY2}, any stratification of the derived category $\cd(A)$ is finite. If the simple factors in any stratification of $\cd(A)$ are unique up to ordering and up to derived equivalence, we say that the algebra $A$ satisfies the \emph{derived Jordan--H\"older property} (DJHP for short).
As examples, piecewise hereditary algebras \cite{AKL2,AKL3} and derived discrete algebras \cite{Q} satisfy DJHP.

\begin{proposition}\label{prop:DJHP-for-2-vertex-alg-with-finite-global-dimension}
Assume that $k$ is algebraically closed. If $A$ is of rank $2$ and of finite global dimension, then $A$ satisfies DJHP.
\end{proposition}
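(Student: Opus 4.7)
The plan is to use the rank constraint to pin down the shape of any stratification of $\cd(A)$, and then to identify the resulting rank-$1$ leaves up to derived equivalence.

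Since $\rank(A)=2$ and rank is additive across recollements by Proposition~\ref{prop:invariants}(ii), any nontrivial recollement of $\cd(A)$ by $\cd(B)$ and $\cd(C)$ must satisfy $\rank(B)=\rank(C)=1$, and by Proposition~\ref{prop:invariants}(i) both $B$ and $C$ also have finite global dimension. The same additivity shows that every rank-$1$ algebra is derived simple: any recollement it admitted would force one side to have rank $0$ and thus be trivial. So every stratification of $\cd(A)$ has depth at most $1$: either the trivial stratification with sole simple factor $A$ (when $A$ itself is derived simple), or else one with exactly two leaves, each coming from a rank-$1$ algebra of finite global dimension.

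To finish, I would show that any rank-$1$ finite dimensional $k$-algebra $B$ of finite global dimension is derived equivalent to $k$. Passing to the basic algebra of $B$ preserves rank and global dimension, so I may assume $B$ is basic, hence local. I then invoke the classical fact that over an algebraically closed field, a finite dimensional local algebra of finite global dimension is isomorphic to $k$. A self-contained proof uses the Cartan determinant: for any finite dimensional algebra of finite global dimension, the Cartan map $K_0(\proj B)\to K_0(\smod B)$ is an isomorphism, so the Cartan matrix has determinant $\pm 1$; for a basic local $k$-algebra (with $k$ algebraically closed, so that $\End_B(B/\rad B)=k$) this matrix is just the $1\times 1$ matrix $(\dim_k B)$, forcing $\dim_k B=1$ and hence $B=k$. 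So $B$ is Morita, hence derived, equivalent to $k$, and $k$ is derived simple being local (see \cite[Proposition 4.11]{AKL2}).

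Combining the two steps, the simple factors of any stratification of $\cd(A)$ form either the singleton $\{A\}$ or a pair of copies of $k$ up to derived equivalence, depending only on whether $A$ itself is derived simple. This multiset is therefore determined by $A$ up to ordering and derived equivalence, which is precisely DJHP. The only nonformal ingredient is the local-algebra fact above, and the Cartan-determinant argument makes it elementary, so I do not anticipate any serious obstacle.
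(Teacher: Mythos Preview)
Your proof is correct and follows essentially the same route as the paper's: reduce via Proposition~\ref{prop:invariants} to the claim that a rank-$1$ algebra of finite global dimension is derived equivalent to $k$, and conclude. The only difference is that the paper states this last fact without justification, while you supply the Cartan-determinant argument; this is a welcome elaboration but not a genuinely different approach.
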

\begin{proof}
Assume that $A$ is not derived simple. Then there are algebras $B$ and $C$ such that there is a nontrivial recollement of $\cd(A)$ by $\cd(B)$ and $\cd(C)$. By Proposition~\ref{prop:invariants}, both $B$ and $C$ are of rank $1$ and of finite global dimension, so they are derived equivalent to $k$. Therefore, any stratification of $\cd(A)$ is a recollement and has simple factors two copies of $k$.
\end{proof}

According to \cite[Theorem 3.7]{LY16}, an algebra over an algebraically closed field which is of rank $2$ and of finite global dimension is either derived simple or derived equivalent to a quasi-hereditary algebra.




\section{Main result} \label{s:main result}

In this section we construct a matrix algebra $\Lambda = \Lambda(A,B)$ from two elementary algebras $A$ and $B$. We show that it has two natural stratifying ideals. By analysing the factors in the induced stratifying recollements, we give two cases when DJHP fails for $\Lambda$ (see Theorem \ref{t:maintheorem}).

\medskip

Recall that an algebra is said to be \emph{elementary} if all its simple modules are 1-dimensional, equivalently, if it is isomorphic to the quotient of the path algebra of a finite quiver by an admissible ideal. For an elementary algebra $A$ let $\rad(A)$ be the radical of the algebra $A$ and $\bar{A}=A/\rad(A)$ the semisimple quotient, which is the direct product of finitely many copies of $k$. We fix a retraction $\bar{A} \hookrightarrow A$ such that $\bar{A}$ becomes a subalgebra of $A$ (if $A$ is given by a quiver with relations, we take $\bar{A}$ to be the subspace spanned by all trivial paths). Then $A$ is isomorphic to $\bar{A} \oplus \rad(A)$ as a $k$-vector space. For $a\in A$, we will write $a=a_1+a_2$ with $a_1\in \bar{A}$ and $a_2\in \rad(A)$.

\subsection{The trivially twisted tensor product}\label{ss:tensor-product}

Let $A$ and $B$ be two elementary algebras.
Define a $k$-linear map $\tau: B \otimes A \ra A \otimes B$ as the composition
$$B \otimes A \twoheadrightarrow (B \otimes A)/ (\rad(B) \otimes \rad(A)) \xrightarrow{\text{flip}} (A \otimes B) / (\rad(A) \otimes \rad(B)) \hookrightarrow A \otimes B$$
where the $\text{flip}$ is defined by switching the two components in the tensor products,
$(A \otimes B) / (\rad(A) \otimes \rad (B))$ is identified with $\bar{A} \otimes \bar{B} \oplus \bar{A} \otimes \rad(B) \oplus \rad(A) \otimes \bar{B}$ and naturally embedded into $A\ten B$. 
Thus for $a\in A$ and $b\in B$ we have
\begin{align}\label{formula:tau}
\tau(b \otimes a) = a \otimes b_1 + a_1 \otimes b_2 = a_1 \otimes b_1 + a_2 \otimes b_1 + a_1 \otimes b_2=a\ten b-a_2\ten b_2.
\end{align}
In particular, $\tau(b \otimes a)=0$ if $a\in\rad(A)$ and $b\in\rad(B)$ and $\tau(b\otimes a)=a\otimes b$ if $a\in \bar{A}$ or $b\in \bar{B}$.

Denote by $\mu_A$ and $\mu_B$ the multiplications of the algebras $A$ and $B$, respectively. Define a $k$-linear map $\mu_\tau$ on the tensor product $A\otimes B$:
$$\mu_\tau: (A \otimes B) \otimes (A \otimes B) \xrightarrow{\id_A \otimes \tau \otimes \id_B} A \otimes A \otimes B \otimes B \xrightarrow{\mu_A \otimes \mu_B} A \otimes B.$$
More precisely, for $a,a' \in A$ and $b,b' \in B$, 
we have
\begin{align*}
\mu_\tau((a \otimes b) \otimes (a' \otimes b')) & =  \mu_A \otimes \mu_B (a \otimes (a' \otimes b_1 + a'_1 \otimes b_2) \otimes b')\\
& =  aa' \otimes b_1b' + aa'_1 \otimes b_2 b'\\
& = aa'\otimes bb'-aa'_2\otimes b_2 b'.
\end{align*}

\begin{proposition}\label{prop:twisted-tensor-product}
The vector space $A \otimes B$ with multiplication $\mu_\tau$ is an algebra.
\end{proposition}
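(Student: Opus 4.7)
The plan is to verify that $\mu_\tau$ is associative and admits $1_A\otimes 1_B$ as a two-sided unit.

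First, I would check the unit. Because $A$ and $B$ are elementary and the chosen retractions $\bar A\hookrightarrow A$ and $\bar B\hookrightarrow B$ send units to units, $1_A\in\bar A$ and $1_B\in\bar B$, so $(1_A)_2=0$ and $(1_B)_2=0$. The explicit formula $(a\otimes b)(a'\otimes b')=aa'\otimes bb'-aa'_2\otimes b_2 b'$ then immediately yields $(1_A\otimes 1_B)(a'\otimes b')=a'\otimes b'$ and $(a\otimes b)(1_A\otimes 1_B)=a\otimes b$.

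For associativity, rather than grinding out both triple products from the explicit formula, I would repackage the check as the standard criterion for a multiplication built from a twist: $\mu_\tau=(\mu_A\otimes \mu_B)\circ(\id_A\otimes \tau\otimes \id_B)$ is associative if and only if $\tau$ satisfies the two hexagon identities $\tau\circ(\id_B\otimes \mu_A)=(\mu_A\otimes \id_B)\circ(\id_A\otimes \tau)\circ(\tau\otimes \id_A)$ and $\tau\circ(\mu_B\otimes \id_A)=(\id_A\otimes \mu_B)\circ(\tau\otimes \id_B)\circ(\id_B\otimes \tau)$. To verify the first, I would expand $\tau(b\otimes aa')=aa'\otimes b-(aa')_2\otimes b_2$ on the left, and on the right apply $\tau(b\otimes x)=x\otimes b-x_2\otimes b_2$ twice (using $(a_2)_2=a_2$ at the intermediate step) to obtain $aa'\otimes b-(aa'_2+a_2a'-a_2a'_2)\otimes b_2$. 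Equality then reduces to the single algebraic identity $(aa')_2=aa'_2+a_2a'-a_2a'_2$, which falls out of the expansion $aa'=(a_1+a_2)(a'_1+a'_2)$ once the semisimple contribution $a_1a'_1$ is subtracted. The second hexagon identity is handled by the symmetric computation on the $B$-side.

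There is no real conceptual obstacle; the whole verification is bookkeeping. The substantive point is simply that $\rad(A)$ and $\rad(B)$ are ideals and the units sit inside the semisimple retractions, so $\tau$ is forced to be the flip on $\bar A\otimes B+A\otimes\bar B$ and to vanish on $\rad(A)\otimes\rad(B)$; this mild behaviour is exactly what the two hexagon compatibilities demand.
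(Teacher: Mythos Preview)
Your proposal is correct and follows essentially the same approach as the paper: both reduce associativity to the two hexagon identities for $\tau$ (the paper cites \cite{CSV} for this criterion) and verify the first one by direct computation, the second by symmetry. One tiny slip: in the first hexagon the intermediate identity you need is $(b_2)_2=b_2$, not $(a_2)_2=a_2$, since the second application of $\tau$ acts on $b_2\otimes a'$; the underlying principle is of course the same.
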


\begin{proof}
By \cite[Proposition/Definition 2.3 and Remark 2.4 (1)]{CSV}, it is enough to prove
\begin{align*}\tau \circ (\id_B \otimes \mu_A) &= (\mu_A \otimes \id_B) \circ (\id_A \otimes \tau) \circ (\tau \otimes \id_A),\\
\tau \circ (\mu_B \otimes \id_A) &= (\id_A \otimes \mu_B) \circ (\tau \otimes \id_B) \circ (\id_B \otimes \tau).
\end{align*}
We only prove the first equality and the second is similar. 
Let $b\in B$ and $a,a'\in A$. Because $(aa')_1=a_1a'_1$, we have
\begin{align*}
\tau \circ (\id_B \otimes \mu_A)(b \otimes a \otimes a') & =  \tau( b \otimes aa')\\
& \stackrel{\eqref{formula:tau}}{=}  aa'\otimes b_1 + (aa')_1\otimes b_2\\
& =  aa' \otimes b_1 + a_1a'_1 \otimes b_2.
\end{align*}
On the other hand
\begin{align*}
(\mu_A \otimes \id_B) \circ (\id_A \otimes \tau) \circ (\tau \otimes \id_A)(b \otimes a \otimes a') & \stackrel{\eqref{formula:tau}}{=}
       (\mu_A \otimes \id_B) \circ (\id_A \otimes \tau)((a \otimes b_1 + a_1 \otimes b_2) \otimes a')\\
& =  (\mu_A \otimes \id_B)(a \otimes a' \otimes b_1 + a_1 \otimes a'_1 \otimes b_2)\\
& =  aa' \otimes b_1 + a_1a'_1 \otimes b_2.\qedhere
\end{align*}
\end{proof}

Following \cite{CX} we call the algebra in Proposition~\ref{prop:twisted-tensor-product} the {\it trivially twisted tensor product} of $A$ and $B$ (over $k$) and denote it by $A\ten_0 B$. If $A$ or $B$ is semisimple, then $A\ten_0 B$ is the usual tensor product $A\ten B$ of algebras. Moreover, $\bar{A}\ten B$ and $A\ten \bar{B}$ are subalgebras of $A\ten_0 B$.

\begin{lemma}\label{l:twisted tensor} 
\begin{itemize}
\item[(i)] $A\otimes \rad(B)$ and $\rad(A) \otimes B$ are two-sided ideals of $A \otimes_0 B$ with  quotients $A \otimes \bar{B}$ and $\bar{A} \otimes B$, respectively.

\item[(ii)] The restriction of $\mu_\tau$ to $(A \otimes \rad(B)) \otimes (\rad(A) \otimes B)$ vanishes.
\end{itemize}
\end{lemma}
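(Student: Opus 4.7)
The plan is to exploit the explicit multiplication formula
$$\mu_\tau((a \otimes b) \otimes (a' \otimes b')) = aa'\otimes bb'-aa'_2\otimes b_2 b',$$
which was established in the proof of Proposition~\ref{prop:twisted-tensor-product}. Everything reduces to routine verifications based on this identity together with the elementary observations that $\rad(A)$ is a two-sided ideal of $A$, that $\rad(B)$ is a two-sided ideal of $B$, and that $a\in\rad(A)$ (respectively $b\in\rad(B)$) forces $a_1=0$ and $a_2=a$ (respectively $b_1=0$ and $b_2=b$).

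For part (i), to see that $A\otimes\rad(B)$ is a two-sided ideal, take $a\otimes b\in A\otimes_0 B$ arbitrary and $a'\otimes b'$ with $b'\in\rad(B)$. Then both summands $aa'\otimes bb'$ and $aa'_2\otimes b_2 b'$ lie in $A\otimes\rad(B)$ because $\rad(B)$ absorbs $b_2 b'$ and $bb'$ on the right. For left multiplication, if $b\in\rad(B)$ then $b_2=b$ and both $bb'$ and $b_2 b'$ belong to $\rad(B)$. The argument for $\rad(A)\otimes B$ is symmetric, using that $\rad(A)$ absorbs $aa'$ and that when $a'\in\rad(A)$ one has $a'_2=a'$. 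To identify the quotients, one checks that the natural vector-space isomorphism $A\otimes_0 B/(A\otimes\rad(B))\cong A\otimes\bar B$ is a morphism of algebras: lifting elements of $A\otimes\bar B$ as $a\otimes\bar b$ with $\bar b\in\bar B$, the twist term $aa'_2\otimes\bar b_2\bar b'$ vanishes since $\bar b_2=0$, so the induced multiplication coincides with the ordinary tensor product multiplication on $A\otimes\bar B$. The same argument, applied on the other side, identifies the quotient by $\rad(A)\otimes B$ with $\bar A\otimes B$.

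For part (ii), take $a\otimes b\in A\otimes\rad(B)$ and $a'\otimes b'\in\rad(A)\otimes B$. Since $b\in\rad(B)$ we have $b_2=b$, and since $a'\in\rad(A)$ we have $a'_2=a'$. Substituting into the multiplication formula yields
$$\mu_\tau((a\otimes b)\otimes(a'\otimes b'))=aa'\otimes bb'-aa'\otimes bb'=0,$$
which is the required vanishing.

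I do not anticipate a genuine obstacle here; the only point requiring care is keeping the decomposition $a=a_1+a_2$, $b=b_1+b_2$ straight and using the correct one of $a\in\rad(A)\Rightarrow a_2=a$ or $\bar b\in\bar B\Rightarrow\bar b_2=0$ at each step.
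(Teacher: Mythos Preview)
Your proof is correct and follows essentially the same approach as the paper: both arguments reduce to direct computation using the explicit formula for $\mu_\tau$, with the paper favouring the expansion $aa'\otimes b_1b' + aa'_1\otimes b_2b'$ while you use the equivalent form $aa'\otimes bb' - aa'_2\otimes b_2b'$. Your treatment is slightly more detailed in that you explicitly verify the induced algebra structure on the quotients, which the paper leaves implicit.
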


\begin{proof}

(i) We check that $A \otimes \rad(B)$ is closed when multiplying with an element in $A \otimes_0 B$. The proof for $\rad(A) \otimes B$ is similar. For $a,a'\in A$ and $b=b_2 \in \rad(B),b' \in B$ we have
\begin{align*}
\mu_\tau((a \otimes b_2) \otimes (a' \otimes b')) & = \mu_A \otimes \mu_B (a \otimes  a'_1 \otimes b_2 \otimes b')\\
& = aa'_1 \otimes b_2 b' \in A \otimes \rad(B),
\end{align*}
and
\begin{align*}
\mu_\tau((a' \otimes b') \otimes (a \otimes b_2)) & \stackrel{\eqref{formula:tau}}{=}  \mu_A \otimes \mu_B (a' \otimes  (a \otimes b'_1 + a_1 \otimes b'_2)  \otimes b_2)\\
& =  a'a \otimes b'_1 b_2 + a'a_1 \otimes b'_2 b_2 \in A \otimes \rad(B).
\end{align*}

(ii) This follows from the fact that the restriction of $\tau$ to $\rad(B) \otimes \rad(A)$ vanishes.
\end{proof}

\subsection{The matrix algebra: construction and properties}\label{ss:construction} \label{ss:property}

Let $A$ and $B$ be two elementary algebras. We have the following data:
\begin{itemize}
\item[(1)] an $(\bar{A} \otimes B)$-$(A \otimes_0 B)$-bimodule $\bar{A} \otimes B$: the left $(\bar{A}\ten B)$-module structure is the free module of rank $1$; the
right $(A \otimes_0 B)$-module structure is induced from the quotient map $A \otimes_0 B \twoheadrightarrow \bar{A} \otimes B = (A\otimes_0 B)/ (\rad(A) \otimes B)$ in Lemma~\ref{l:twisted tensor} (i); explicitly, we have
\[
(a'\ten b')(a\ten b)(a''\ten b'')=a'aa''_1\ten b'bb''
\]
for $a\ten b, a'\ten b'\in \bar{A}\ten B$ and $a''\ten b''\in A\ten_0 B$;

\item[(2)] an $(A \otimes_0 B)$-$(\bar{A} \otimes B)$-bimodule $A \otimes \rad(B)$: $A \otimes \rad(B)$ as a two-sided ideal of $A \otimes_0 B$ is naturally an $(A\otimes_0 B)$-$(A \otimes_0 B)$-bimodule; by Lemma \ref{l:twisted tensor} (ii), $\rad(A) \otimes B$ annihilates $A \otimes \rad(B)$ from the right hand side, hence the right $(A\ten_0 B)$-module structure on $A \otimes \rad(B)$ factors through $\bar{A} \otimes B$ via the quotient map $A \otimes_0 B \twoheadrightarrow \bar{A} \otimes B = (A\otimes_0 B)/ (\rad(A) \otimes B)$ in Lemma~\ref{l:twisted tensor} (i); the right ${\bar A} \otimes B$-module structure on $A\ten \rad(B)$ can also be obtained via the embedding $\bar{A} \otimes B\hookrightarrow A\ten_0 B$ because the composition of this embedding with the quotient map $A \otimes_0 B \twoheadrightarrow \bar{A} \otimes B$ is the identity of $\bar{A}\ten B$; explicitly, we have
\[
(a'\ten b')(a\ten b)(a''\ten b'')=a'aa''\ten b'_1bb''+a'a_1a''\ten b'_2bb''
\]
for $a\ten b\in A\ten \rad(B)$, $a'\ten b'\in A\ten_0 B$ and $a''\ten b''\in\bar{A}\ten B$;

\item[(3)] an  $(A \otimes_0 B)$-$(A \otimes_0 B)$-bimodule homomorphism $$\varphi:(A \otimes \rad(B)) \otimes_{\bar{A} \otimes B} (\bar{A} \otimes B) \rightarrow A \otimes_0 B,~~(a\ten b)\ten (a'\ten b')\mapsto aa'\ten bb'$$ for $a\ten b\in A\ten \rad(B)$ and $a'\ten b'\in\bar{A}\ten B$; it is the composition of the canonical isomorphism $(A \otimes \rad(B)) \otimes_{\bar{A} \otimes B} (\bar{A} \otimes B)\stackrel{\cong}{\rightarrow} A\ten \rad(B)$ and the embedding $A\ten \rad(B)\hookrightarrow A\ten_0 B$;

\item[(4)] an $(\bar{A} {\otimes} B)$-$(\bar{A} {\otimes} B)$-bimodule homomorphism
\begin{align*}
\psi:(\bar{A} \otimes B) \otimes_{{A} \otimes_0 B} ({A} \otimes \rad(B)) \rightarrow \bar{A} {\otimes } B,~~(a\ten b)\ten (a'\ten b')\mapsto aa'_1\ten bb'
\end{align*}
for $a\ten b\in \bar{A}\ten B$ and $a'\ten b'\in A\ten \rad(B)$; it is the composition of the embedding $(\bar{A} \otimes B) \otimes_{{A} \otimes_0 B} ({A} \otimes \rad(B)) \hookrightarrow (\bar{A} \otimes B) \otimes_{{A} \otimes_0 B} ({A} \otimes_0 B)$ with the canonical isomorphism $(\bar{A} \otimes B) \otimes_{{A} \otimes_0 B} ({A} \otimes_0 B)\stackrel{\cong}{\rightarrow} \bar{A}\ten B$. \end{itemize}
Moreover, the following two diagrams are commutative
\begin{align*}
\xymatrix@C=1pc{
(\bar{A} \otimes B)\ten_{A\ten_0 B}(A \otimes \rad(B)) \otimes_{\bar{A} \otimes B} (\bar{A} \otimes B) \ar[rr]^(0.6){\id_{\bar{A}\ten B}\ten\varphi} \ar[d]^{\psi\ten\id_{\bar{A}\ten B}} & &(\bar{A} \otimes B)\ten_{A\ten_0 B} (A\ten_0 B)\ar[d]^\cong\\
(\bar{A}\ten B)\ten_{\bar{A}\ten B}(\bar{A}\ten B)\ar[rr]^(0.55)\cong && \bar{A}\ten B
}
\\
\xymatrix@C=1pc{
(A\ten\rad(B))\ten_{\bar{A}\ten B}(\bar{A}\ten B)\ten_{A\ten_0 B}(A\ten\rad(B))\ar[rr]^(0.6){\id_{A\ten\rad(B)}\ten\psi} \ar[d]^{\varphi\ten\id_{A\ten \rad(B)}} && (A\ten\rad(B))\ten_{\bar{A}\ten B} (\bar{A}\ten B)\ar[d]^\cong\\
(A\ten_0 B)\ten_{A\ten_0 B}(A\ten \rad(B))\ar[rr]^(0.6)\cong && A\ten\rad(B).
}
\end{align*}
We check the commutativity of the first diagram, and the commutativity of all other diagrams in the section can be checked in a similar way. For $a\ten b\in\bar{A}\ten B$, $a'\ten b'\in A\ten\rad(B)$ and $a''\ten b''\in \bar{A}\ten B$, we have
\begin{align*}
(a\ten b)\varphi((a'\ten b')\ten (a''\ten b''))&=(a\ten b)(a'a''\ten b'b'')=aa'_1a''\ten bb'b'',\\
\psi((a\ten b)\ten(a'\ten b'))(a''\ten b'')&=(aa'_1\ten bb')(a''\ten b'')=aa'_1a''\ten bb'b''.
\end{align*}
Therefore we obtain an algebra
$$\Lambda(A,B) := \begin{pmatrix} A \otimes_0 B & A \otimes \rad(B) \\ \bar{A} \otimes B & \bar{A} \otimes B \end{pmatrix},$$
whose addition is componentwise and whose multiplication is given by matrix multiplication together with the homomorphisms $\varphi$ and $\psi$. See for example \cite{Green82} for the study on matrix rings. In Section~\ref{s:quiver-relation} we will describe the quiver with relations of $\Lambda(A,B)$ in terms of those of $A$ and $B$.

\smallskip
Put $\Lambda=\Lambda(A,B)$. By abuse of notation we write $1$ for the unit elements in all of the algebras $A$, $B$ and $\Lambda$. Consider the two natural idempotents in $\Lambda$
$${\footnotesize e_1=\begin{pmatrix} 1\ten 1 & 0 \\ 0 & 0 \end{pmatrix},\hskip 20pt e_2 =\begin{pmatrix} 0 & 0 \\ 0 & 1\ten 1 \end{pmatrix}.}$$
Then in $\Lambda$ it holds that $1= e_1 + e_2$ and $e_1e_2 = 0 = e_2e_1$. Since $A$ is elementary, the rank of $A$ equals $\mathrm{dim}\bar{A}$. The rank of a general algebra is smaller than or equal to the cardinality of a complete set of primitive orthogonal idempotents of $A$; the equality holds if it is elementary.

\begin{lemma} The following hold for $\Lambda$: \label{l:idempotents in Lambda}
\begin{itemize}
\item[(i)] $e_1 \Lambda e_1 = A \otimes_0 B$ and  $\Lambda / \Lambda e_1 \Lambda \cong \bar{A} \otimes \bar{B}$;

\item[(ii)]  $e_2 \Lambda e_2 = \bar{A} \otimes B$ and $\Lambda / \Lambda e_2 \Lambda \cong {A} \otimes \bar{B}$;


\item[(iii)] $\Lambda$ is elementary with $\rank(\Lambda) = 2\rank(A) \rank(B)$.

\end{itemize}
\end{lemma}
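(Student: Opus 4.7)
The plan for parts (i) and (ii) is direct matrix-product computation. For (i), expanding $e_1 \Lambda e_1$ using the matrix multiplication rules of $\Lambda$ immediately gives the top-left entry only, which is $A \otimes_0 B$. To identify $\Lambda e_1 \Lambda$, I would expand an arbitrary element $\lambda e_1 \mu$ and record each of its four entries using the bimodule actions and the maps $\varphi, \psi$ from the construction. By varying $\lambda$ and $\mu$, the $(1,1)$, $(1,2)$ and $(2,1)$ entries fill out all of $A \otimes_0 B$, $A \otimes \rad(B)$ and $\bar A \otimes B$ respectively (using that the respective units act as identities), while the $(2,2)$ entry is produced through $\psi$, whose image lies in $\bar A \otimes \rad(B)$. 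Hence $\Lambda / \Lambda e_1 \Lambda$ is concentrated in position $(2,2)$ and equals $(\bar A \otimes B)/(\bar A \otimes \rad(B)) \cong \bar A \otimes \bar B$. Part (ii) is symmetric: $\Lambda e_2 \Lambda$ saturates three of the four entries and restricts the $(1,1)$ entry to $\im(\varphi) = A \otimes \rad(B)$, producing quotient $(A \otimes_0 B)/(A \otimes \rad(B)) \cong A \otimes \bar B$.

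For (iii), the key is to describe $\rad(\Lambda)$ explicitly. I would first establish that $\rad(A \otimes_0 B) = \rad(A) \otimes B + A \otimes \rad(B)$. Both summands are two-sided ideals by Lemma~\ref{l:twisted tensor}(i), and each is nilpotent: the formula $\mu_\tau(a \otimes b, a' \otimes b') = aa' \otimes bb' - aa'_2 \otimes b_2 b'$ shows that the twisted product of two elements of $A \otimes \rad(B)$ lies in $A \otimes \rad(B)^2$, and similarly on the other side. Their sum is therefore nilpotent, and the quotient $(A \otimes_0 B)/(\rad(A) \otimes B + A \otimes \rad(B))$ is canonically $\bar A \otimes \bar B$, which is semisimple.

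Next I would introduce the subset $J \subseteq \Lambda$ consisting of matrices with $(1,1)$-entry in $\rad(A \otimes_0 B)$, arbitrary $(1,2)$ and $(2,1)$ entries, and $(2,2)$-entry in $\bar A \otimes \rad(B)$, and verify it is a two-sided ideal using the containments $\im(\varphi) = A \otimes \rad(B) \subseteq \rad(A \otimes_0 B)$ and $\im(\psi) \subseteq \bar A \otimes \rad(B) = \rad(\bar A \otimes B)$ together with nilpotency of the diagonal radicals. A standard iteration argument shows $J$ is nilpotent: each successive power of $J$ forces each entry into a deeper stratum of the respective diagonal radicals. Since $\Lambda / J \cong (\bar A \otimes \bar B) \times (\bar A \otimes \bar B) \cong k^{2\rank(A)\rank(B)}$ is semisimple commutative, we conclude $J = \rad(\Lambda)$, $\Lambda$ is elementary, and $\rank(\Lambda) = 2\rank(A)\rank(B)$.

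The main technical obstacle is tracking the twisted correction term $-aa'_2 \otimes b_2 b'$ through the nilpotency arguments for $\rad(A \otimes_0 B)$: a naive comparison with the untwisted tensor product fails. The saving point is that this correction always lies in $\rad(A) \otimes \rad(B)$, which is contained in both summands of the proposed radical, so it is absorbed at every step and the filtration arguments still work.
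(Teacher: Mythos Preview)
Your arguments for (i) and (ii) are essentially the paper's: a direct matrix computation of $\Lambda e_i\Lambda$ and identification of the surviving entry in the quotient.

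For (iii) you take a different route from the paper. The paper does \emph{not} prove directly that the ideal
\[
J=\begin{pmatrix}\rad(A)\otimes B+A\otimes\rad(B)&A\otimes\rad(B)\\ \bar A\otimes B&\bar A\otimes\rad(B)\end{pmatrix}
\]
is nilpotent. Instead it sandwiches the rank: it exhibits a complete set of $2\rank(A)\rank(B)$ primitive orthogonal idempotents of $\Lambda$ (using that $(e_i\otimes f_j)(A\otimes_0 B)(e_i\otimes f_j)=e_iAe_i\otimes_0 f_jBf_j$ is local), giving $\rank(\Lambda)\le 2\rank(A)\rank(B)$, and then observes that $\Lambda/J\cong(\bar A\otimes\bar B)^2$ is semisimple, giving $\rank(\Lambda)\ge 2\rank(A)\rank(B)$. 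Equality of the two bounds then forces $J=\rad(\Lambda)$ a posteriori.

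Your direct nilpotency approach is valid in principle, but your sketch has a gap. The claim that ``each successive power of $J$ forces each entry into a deeper stratum of the respective diagonal radicals'' is not literally true: since $\im\psi=\bar A\otimes\rad(B)$ is \emph{all} of $\rad(\bar A\otimes B)$, the $(2,2)$-entry of $J^2$ is
\[
(\bar A\otimes\rad B)^2+\im\psi=\bar A\otimes\rad(B),
\]
which has not shrunk at all. The shrinking does occur eventually (for instance $(J^3)_{22}\subseteq\bar A\otimes\rad(B)^2$, because every contribution to that entry passes through the $(1,2)$-position, which by then lies in $\rad(A)\otimes\rad(B)+A\otimes\rad(B)^2$ and the $\rad(A)$-part is killed by $\psi$), but the bookkeeping is more delicate than a one-line ``standard iteration''. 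You would need to track a joint filtration in both $\rad(A)$-depth and $\rad(B)$-depth across all four entries simultaneously. The paper's sandwich argument avoids this entirely; your approach buys a more explicit description of $\rad(\Lambda)$ up front, at the cost of a genuinely nontrivial nilpotency verification that you have not yet carried out.
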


\begin{proof}
(i) It is clear that $e_1 \Lambda e_1 = A \otimes_0 B$. The ideal generated by $e_1$ is
\begin{align*}
\Lambda e_1 \Lambda & =  \begin{pmatrix} A \otimes_0 B & A \otimes \rad(B) \\ \bar{A} \otimes B & \bar{A} \otimes B \end{pmatrix} \begin{pmatrix} 1\ten 1 & 0 \\ 0 & 0 \end{pmatrix}  \begin{pmatrix} A \otimes_0 B & A \otimes \rad(B) \\ \bar{A} \otimes B & \bar{A} \otimes B \end{pmatrix} \\
& =    \begin{pmatrix} A \otimes_0 B & 0 \\ \bar{A} \otimes B & 0 \end{pmatrix} \begin{pmatrix} A \otimes_0 B & A \otimes \rad(B) \\ \bar{A} \otimes B & \bar{A} \otimes B \end{pmatrix} \\
& =   \begin{pmatrix} A \otimes_0 B & A \otimes \rad(B) \\ \bar{A} \otimes B & \bar{A} \otimes \rad(B) \end{pmatrix}.
\end{align*}
Hence  $\Lambda / \Lambda e_1 \Lambda \cong \bar{A} \otimes B / (\bar{A} \otimes \rad(B)) \cong \bar{A} \otimes \bar{B}$.

(ii) Similar to (i).

(iii) Let $\{e_i\mid i\in I\}$ and $\{f_j\mid j\in J\}$ be complete sets of primitive orthogonal idempotents of $\bar{A}$ and $\bar{B}$, respectively. Then $e_i\ten f_j$ ($i\in I, j\in J$) is a primitive idempotent of $A\ten_0 B$ because $(e_i\ten f_j )(A\ten_0 B) (e_i\ten f_j)\stackrel{\eqref{formula:tau}}{=}e_iAe_i\ten_0 f_jBf_j$ is local. It follows that
\[
\{\begin{pmatrix}e_i\ten f_j & 0\\ 0 &0\end{pmatrix},\begin{pmatrix}0&0\\ 0 & e_i\ten f_j\end{pmatrix} \mid i\in I, j\in J\}
\]
is a complete set of primitive orthogonal idempotents of $\Lambda$. It follows that $\rank(\Lambda)\leq 2\rank(A)\rank(B)$.

On the other hand, the subspace
\begin{align*}
I&=\begin{pmatrix}
A\ten\rad(B)+\rad(A)\ten B & A\ten \rad(B)\\
\bar{A}\ten B & \bar{A}\ten \rad(B)
\end{pmatrix}
\end{align*}
is an ideal of $\Lambda$ with semisimple quotient algebra
\[
\Lambda/I=\begin{pmatrix}
\bar{A}\ten \bar{B} & 0\\
0 & \bar{A}\ten \bar{B}
\end{pmatrix}.
\]
Therefore $\rank(\Lambda)\geq 2\rank(A)\rank(B)$. Altogether we have $\rank(\Lambda)=2\rank(A)\rank(B)$ and $\Lambda$ is elementary. Moreover,
$I=\rad(\Lambda)$ and $\bar{\Lambda}=\Lambda/I$.
\end{proof}

Sometimes we can recover $B$ from $\Lambda$. When $A$ is local nonsimple, a complete set of primitive orthogonal idempotents of $\Lambda$ naturally fall into two groups, as shown in the proof below. Using one group we recover $B$. More precisely, take any complete set $e_1,\ldots,e_{2m}$ of primitive orthogonal idempotents of $\Lambda$. We may assume that $\Lambda/\Lambda(1-e_i)\Lambda$ is isomorphic to $A$ for $i=1,\ldots,m$ and to $k$ for $i=m+1,\ldots,2m$. Put $e=e_{m+1}+\ldots+e_{2m}$. Then $e\Lambda e\cong B$.
This idea does not work for general $A$. 

\begin{lemma}\label{lem:recovering-B}
Assume that $A$ is local nonsimple and let $B'$ be another elementary algebra. If $\Lambda=\Lambda(A,B)$ and $\Lambda(A,B')$ are isomorphic as algebras, then $B$ and $B'$ are isomorphic as algebras.
\end{lemma}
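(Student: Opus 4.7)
My plan is to identify the idempotent $e_2$ intrinsically inside $\Lambda(A,B)$, up to conjugation, so that any algebra isomorphism $\Phi\colon\Lambda(A,B)\to\Lambda(A,B')$ must send $e_2$ to a conjugate of the analogous idempotent $e_2'$ of $\Lambda(A,B')$. Granting this, one obtains
\[
B\cong e_2\Lambda(A,B)e_2\cong \Phi(e_2)\Lambda(A,B')\Phi(e_2)\cong e_2'\Lambda(A,B')e_2'\cong B'
\]
by Lemma~\ref{l:idempotents in Lambda}(ii) (together with $\bar A=k$) and the fact that conjugation preserves corner algebras.

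The intrinsic invariant I would use is the number of loops at each vertex of the Gabriel quiver of $\Lambda$, namely $\dim_k g(\rad\Lambda/\rad^2\Lambda)g$ for a primitive idempotent $g$; this is preserved by any algebra isomorphism, since $\Phi$ sends $\rad\Lambda$ to $\rad\Lambda'$. Using the notation in the proof of Lemma~\ref{l:idempotents in Lambda}(iii), I claim that every $g_j^{(2)}$ carries zero loops while every $g_j^{(1)}$ carries at least one. At $g_j^{(1)}$, a routine computation inside the $(1,1)$-corner (the trivially twisted tensor product $A\otimes_0 B$) yields $(\rad\Lambda/\rad^2\Lambda)_{1,1}\cong(\rad A/\rad^2 A)\otimes\bar B$, whose restriction at $1\otimes f_j$ has dimension $\dim_k\rad A/\rad^2 A\geq 1$ by Nakayama's lemma, since $A$ being nonsimple forces $\rad A\neq 0$.

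The delicate point, and the one I expect to be the main obstacle, is the vanishing of loops at $g_j^{(2)}$. The $(2,2)$-block of $\rad\Lambda$ equals $\bar A\otimes\rad B$, but the key observation is that the \emph{entire} $(2,1)$-block $\bar A\otimes B$ of $\Lambda$ already sits inside $\rad\Lambda$. Combined with the $(1,2)$-block $A\otimes\rad B$ of $\rad\Lambda$, the map $\psi$ of Section~\ref{ss:construction}(4) shows that products of $(2,1)$- and $(1,2)$-radical entries already exhaust the whole of $\bar A\otimes\rad B$ (taking the scalar factor $a_1=1\in\bar A=k$ suffices). Consequently $(\rad^2\Lambda)_{2,2}=(\rad\Lambda)_{2,2}$, forcing $g_j^{(2)}(\rad\Lambda/\rad^2\Lambda)g_j^{(2)}=0$.

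With the claim in hand, $\Phi$ sends no-loop primitive idempotents of $\Lambda(A,B)$ to no-loop primitive idempotents of $\Lambda(A,B')$. The $n$ idempotents $\Phi(g_j^{(2)})$ are orthogonal, carry no loops, and are pairwise non-conjugate, since they yield non-isomorphic projective $\Lambda(A,B')$-modules via $\Phi$. As $\Lambda(A,B')$ admits exactly $n$ conjugacy classes of no-loop primitive idempotents (namely those of the $g_j^{(2)}{}'$), we deduce $\Phi(e_2)\Lambda(A,B')\cong e_2'\Lambda(A,B')$ as right modules, hence $\Phi(e_2)$ is conjugate to $e_2'$ in $\Lambda(A,B')$. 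The displayed chain then yields $B\cong B'$.
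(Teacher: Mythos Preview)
Your proof is correct and follows the same overall architecture as the paper's: identify an intrinsic invariant of primitive idempotents that separates the $e_1$-block idempotents from the $e_2$-block idempotents, deduce that any algebra isomorphism carries $e_2$ to a conjugate of $e_2'$, and finish via the corner-algebra isomorphism $e_2\Lambda e_2\cong B$.

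The difference lies in the choice of invariant. The paper computes, for each primitive idempotent $g$, the quotient algebra $\Lambda/\Lambda(1-g)\Lambda$: this is isomorphic to $A$ for idempotents in the $e_1$-block and to $k$ for those in the $e_2$-block, and $A\not\cong k$ since $A$ is nonsimple. It then invokes \cite[Theorem 3.4.1]{DrozdKirichenko} to transport conjugacy classes. Your invariant is instead the loop count $\dim_k g(\rad\Lambda/\rad^2\Lambda)g$: positive on the $e_1$-block (using $\rad A/\rad^2 A\neq 0$) and zero on the $e_2$-block (using that the $(2,2)$-entry of $\rad\Lambda$ is swallowed by products of the $(2,1)$- and $(1,2)$-entries). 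Your computation of $(\rad\Lambda/\rad^2\Lambda)$ is confirmed later in the paper, in the proof of Theorem~\ref{thm:quiver-with-relations}, where this quotient is displayed explicitly and the $(2,2)$-block is indeed zero. Your approach is arguably more elementary (no quotient-algebra calculation, no external reference), while the paper's invariant is perhaps conceptually cleaner in that it directly recovers $A$ and $k$ rather than a numerical shadow. Both arguments use the hypothesis that $A$ is local nonsimple in the same essential way: locality gives $\bar A=k$ so that $e_2\Lambda e_2\cong B$, and nonsimplicity separates the two idempotent types. One small point you leave implicit but should state: $\rank(B)=\rank(B')$ follows from $\rank(\Lambda)=\rank(\Lambda')$ and Lemma~\ref{l:idempotents in Lambda}(iii), which you need before matching up the $n$ no-loop classes on each side.
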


\begin{proof} Put $\Lambda'=\Lambda(A,B')$ and suppose that $\phi:\Lambda\rightarrow\Lambda'$ is an algebra isomorphism.
By Lemma~\ref{l:idempotents in Lambda} (iii), $B$ and $B'$ have the same rank. Let $\{f_1,\ldots,f_m\}$ and $\{f'_1,\ldots,f'_m\}$ be complete sets of primitive orthogonal idempotents of $\bar{B}$ and $\bar{B}'$, respectively. Then
\[
\{\begin{pmatrix}1\ten f_j & 0\\ 0 &0\end{pmatrix},\begin{pmatrix}0&0\\ 0 & 1\ten f_j\end{pmatrix} \mid j=1,\ldots,m\}
\]
and
\[
\{\begin{pmatrix}1\ten f'_j & 0\\ 0 &0\end{pmatrix},\begin{pmatrix}0&0\\ 0 & 1\ten f'_j\end{pmatrix} \mid j=1,\ldots,m\}
\]
are complete sets of primitive orthogonal idempotents of $\Lambda$ and $\Lambda'$, respectively. Moreover, for $1\leq j\leq m$,
the ideal of $\Lambda$ generated by  $1-\begin{pmatrix} 1\ten f_j & 0\\ 0 & 0\end{pmatrix}$ is
\begin{align*}
\Lambda (1-\begin{pmatrix} 1\ten f_j & 0\\ 0 & 0\end{pmatrix})\Lambda=\begin{pmatrix}
A\ten(\rad(B)+B(1-f_j)B) & A\ten \rad(B)\\
\bar{A}\ten B & \bar{A}\ten B
\end{pmatrix},
\end{align*}
so
\begin{align*}
\Lambda/\Lambda(1-\begin{pmatrix} 1\ten f_j & 0\\ 0 & 0\end{pmatrix})\Lambda&\cong A.
\end{align*}
Similarly, for $1\leq j\leq m$, we have
\begin{align*}
&\Lambda/\Lambda(1-\begin{pmatrix} 0 & 0\\ 0 & 1\ten f_j\end{pmatrix})\Lambda\cong k,\\
&\Lambda'/\Lambda'(1-\begin{pmatrix} 1\ten f'_{j} & 0\\ 0 & 0\end{pmatrix})\Lambda'\cong A,\\
&\Lambda'/\Lambda'(1-\begin{pmatrix} 0 & 0\\ 0 & 1\ten f'_{j'} \end{pmatrix})\Lambda'\cong k.
\end{align*}
Therefore, by the uniqueness of decomposition of the identity into primitive orthogonal idempotents (\cite[Theorem 3.4.1]{DrozdKirichenko}), there exist an invertible element $\lambda\in\Lambda'$ and a permutation $\sigma$ on $\{1,\ldots,m\}$ such that $\phi\begin{pmatrix} 0 & 0\\ 0 & 1\ten f_j\end{pmatrix}=\lambda\begin{pmatrix} 0 & 0\\ 0 & 1\ten f'_{\sigma(j)}\end{pmatrix}\lambda^{-1}$  for any $1\leq j\leq m$. So $\phi\begin{pmatrix}0 & 0\\ 0 & 1\ten 1\end{pmatrix}=\lambda\begin{pmatrix}0 & 0\\ 0 & 1\ten 1\end{pmatrix}\lambda^{-1}$, and we obtain a chain of algebra isomorphisms
\begin{align*}
B&= \begin{pmatrix}0 & 0\\ 0 & 1\ten 1\end{pmatrix}\Lambda\begin{pmatrix}0 & 0\\ 0 & 1\ten 1\end{pmatrix}\\
&\cong \phi\begin{pmatrix}0 & 0\\ 0 & 1\ten 1\end{pmatrix}\Lambda'\phi\begin{pmatrix}0 & 0\\ 0 & 1\ten 1\end{pmatrix}\\
&\cong \begin{pmatrix}0 & 0\\ 0 & 1\ten 1\end{pmatrix}\Lambda'\begin{pmatrix}0 & 0\\ 0 & 1\ten 1\end{pmatrix}\\
&= B'. \qedhere
\end{align*}
\end{proof}

\subsection{The matrix algebra: failure of DJHP}
Let $A$ and $B$ be two elementary algebras. In Section~\ref{ss:property} we defined a $2 \times 2$-matrix algebra $\Lambda = \Lambda(A,B)$ and two idempotents $e_1$ and $e_2$ of $\Lambda$.

\begin{proposition}\label{p:stratifying}
The idempotent ideals generated by $e_1$ and $e_2$ are both stratifying ideals of $\Lambda$. Consequently, $\cd(\Lambda)$ has two stratifying recollements, induced by $e_1$ and $e_2$ and denoted by $(R1)$ and $(R2)$ respectively, such that
\begin{itemize}
\item[--]
$(R1)$ is a recollement of $\cd(\Lambda)$ by $\cd(\Lambda / \Lambda e_1 \Lambda)$ and $\cd(e_1\Lambda e_1)$, where $\Lambda / \Lambda e_1 \Lambda\cong \bar{A} \otimes \bar{B}$, which is the direct product of $\rank(A)\rank(B)$ copies of $k$, and $e_1\Lambda e_1 = A\otimes_0 B$;
\item[--] $(R2)$ is a recollement of $\cd(\Lambda)$ by $\cd(\Lambda / \Lambda e_2 \Lambda)$ and $\cd(e_2 \Lambda e_2)$, where $\Lambda / \Lambda e_2 \Lambda \cong A \otimes \bar{B}$, which is the direct product of $\rank(B)$ copies of $A$, and $e_2 \Lambda e_2 = \bar{A} \otimes B$, which is the direct product of $\rank(A)$ copies of $B$.
\end{itemize}
\end{proposition}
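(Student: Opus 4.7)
The plan is to verify the hypothesis of Lemma~\ref{l:criterion for stratifying ideal} for each of $e_1$ and $e_2$ separately, producing projective resolutions of $\Lambda/\Lambda e_k\Lambda$ (as a left $\Lambda$-module) whose components above degree zero lie in $\add(\Lambda e_k)$. Once both stratifying properties are established, the existence of the two recollements is automatic by Section~\ref{s:preliminaries}; their outer terms are read off from Lemma~\ref{l:idempotents in Lambda}, together with $\bar{A}\otimes\bar{B}\cong k^{\rank(A)\rank(B)}$, $A\otimes\bar{B}\cong A^{\rank(B)}$ and $\bar{A}\otimes B\cong B^{\rank(A)}$, which all follow from $\bar{A}\cong k^{\rank(A)}$ and $\bar{B}\cong k^{\rank(B)}$. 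So the real content lies in the two stratifying verifications, which have rather different flavours.

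For $e_2$, I expect the resolution to have length one and to be produced by hand. A direct matrix computation using Lemma~\ref{l:twisted tensor} and the bimodule maps $\varphi,\psi$ gives
\[
\Lambda e_2\Lambda=\begin{pmatrix}A\otimes\rad(B) & A\otimes\rad(B)\\ \bar{A}\otimes B & \bar{A}\otimes B\end{pmatrix},
\]
whence $\Lambda/\Lambda e_2\Lambda\cong\Lambda e_1/\Lambda e_2\Lambda e_1$ and a short exact sequence $0\to\Lambda e_2\Lambda e_1\to\Lambda e_1\to\Lambda/\Lambda e_2\Lambda\to 0$. The key step is to verify that the natural multiplication map $\Lambda e_2\otimes_{\bar{A}\otimes B}(e_2\Lambda e_1)\to\Lambda e_2\Lambda e_1$ is an isomorphism of left $\Lambda$-modules. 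Since $e_2\Lambda e_1=\bar{A}\otimes B$ is free of rank one as a left $e_2\Lambda e_2$-module, the tensor product is simply $\Lambda e_2$; evaluating at the generator $1\otimes 1$ identifies the image inside $\Lambda e_1$ with $\begin{pmatrix}A\otimes\rad(B)\\ \bar{A}\otimes B\end{pmatrix}=\Lambda e_2\Lambda e_1$. Thus $\Lambda/\Lambda e_2\Lambda$ is presented by $0\to\Lambda e_2\to\Lambda e_1\to\Lambda/\Lambda e_2\Lambda\to 0$ with $\Lambda e_2\in\add(\Lambda e_2)$, and Lemma~\ref{l:criterion for stratifying ideal} applies.

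For $e_1$, set $\Omega:=\Lambda e_1\Lambda e_2=\begin{pmatrix}A\otimes\rad(B)\\ \bar{A}\otimes\rad(B)\end{pmatrix}$, so that the obvious presentation $0\to\Omega\to\Lambda e_2\to\Lambda/\Lambda e_1\Lambda\to 0$ reduces the problem to resolving $\Omega$ by modules in $\add(\Lambda e_1)$. First I would use $\psi$ from item~(4) of Section~\ref{ss:construction} to see that the natural multiplication map $\Lambda e_1\otimes_{A\otimes_0 B}(A\otimes\rad(B))\to\Omega$ is an isomorphism of left $\Lambda$-modules: the top row $(A\otimes_0 B)\otimes_{A\otimes_0 B}(A\otimes\rad(B))\cong A\otimes\rad(B)$ is tautological, while the bottom row $(\bar{A}\otimes B)\otimes_{A\otimes_0 B}(A\otimes\rad(B))\cong\bar{A}\otimes\rad(B)$ follows from a direct computation using $\psi$. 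Next, pick any projective resolution $\cdots\to Q_1\to Q_0\to\rad(B)\to 0$ of $\rad(B)$ as a left $B$-module and form $\cdots\to A\otimes Q_1\to A\otimes Q_0\to A\otimes\rad(B)\to 0$: a straightforward check from the definition of $\mu_\tau$ shows that $A\otimes f$ is $A\otimes_0 B$-linear for any $B$-linear map $f$, and the identity $A\otimes Bf_j=(A\otimes_0 B)(1\otimes f_j)$ exhibits each $A\otimes Q_i$ as a projective left $A\otimes_0 B$-module.

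The main obstacle is to show that applying $\Lambda e_1\otimes_{A\otimes_0 B}-$ to this resolution preserves exactness, i.e., that $\Tor_i^{A\otimes_0 B}(\Lambda e_1, A\otimes\rad(B))=0$ for $i\geq 1$. I plan to split $\Lambda e_1=(A\otimes_0 B)\oplus(\bar{A}\otimes B)$ as right $A\otimes_0 B$-modules (the two summands correspond to the two rows), which takes care of the free summand tautologically, and for the other summand to establish the functorial identification $(\bar{A}\otimes B)\otimes_{A\otimes_0 B}(A\otimes Q)\cong\bar{A}\otimes Q$ for any left $B$-module $Q$. The case $Q=Bf_j$ follows from $A\otimes Bf_j\cong(A\otimes_0 B)(1\otimes f_j)$ and a direct computation, the general case is additive, and a naturality check identifies the induced differentials with $\bar{A}\otimes f_i$. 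The tensored complex is therefore isomorphic to $\bar{A}\otimes Q_\bullet\to\bar{A}\otimes\rad(B)$, which is exact because $\bar{A}\otimes_k-$ preserves exactness. Splicing the resulting resolution of $\Omega$ with the presentation above gives the projective resolution of $\Lambda/\Lambda e_1\Lambda$ demanded by Lemma~\ref{l:criterion for stratifying ideal}.
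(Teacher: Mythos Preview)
Your proposal is correct and follows essentially the same strategy as the paper: for $e_2$ you produce exactly the short exact sequence $0\to\Lambda e_2\to\Lambda e_1\to\Lambda/\Lambda e_2\Lambda\to 0$ (the paper gets it by right multiplication by $\left(\begin{smallmatrix}0&0\\1\otimes 1&0\end{smallmatrix}\right)$), and for $e_1$ you resolve the kernel $\Omega=\widetilde{\rad(B)}$ by lifting a projective resolution of $\rad(B)$. The only difference is packaging: the paper builds in one step an exact functor $\widetilde{M}=\left(\begin{smallmatrix}A\otimes_0 M\\\bar{A}\otimes M\end{smallmatrix}\right)$ from $B$-modules to $\Lambda$-modules with $\widetilde{B}\cong\Lambda e_1$, reading off exactness from the underlying vector-space complex $(A\oplus\bar{A})\otimes M^\bullet$, whereas you factor the same functor as $\Lambda e_1\otimes_{A\otimes_0 B}(A\otimes_0-)$ and verify exactness via a Tor computation on the two rows separately.
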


We need some preparation for the proof of Proposition~\ref{p:stratifying}.
Let $M$ be a left $B$-module with $B$-action given by $\rho_M:B\ten M\ra M$. By \cite[Sections 3.6, 3.7 and 3.8]{CSV}, the two equalities in the proof of Proposition~\ref{prop:twisted-tensor-product} shows that the map
\[
\xymatrix@R=0.5pc@C=1pc{
(A \otimes B) \otimes (A \otimes M)  \ar[rrr]^{\id_A\otimes \tau \otimes \id_M} &&& A \otimes A \otimes B\otimes M \ar[r]^(0.6){\mu_A\otimes \rho_M}  & A\otimes M\\
(a\ten b)\ten (a'\ten m)\ar@{|->}[rrrr] &&&& aa'\ten b_1m+aa'_1\ten b_2m
}
\]
defines a left $(A\ten_0 B)$-module structure on $A\ten M$. We denote it by $A\ten_0 M$.  Note that the ideal $A\ten \rad(B)$ of $A\ten_0 B$ as a left $(A\otimes_0 B)$-module is $A\ten_0\rad(B)$.
For a left $B$-module homomorphism $f:M\to N$, the map
\[
\id_A\ten f: A\ten_0 M\longrightarrow A\ten_0 N,~~a\ten m\mapsto a\ten f(m)
\]
is a left $A\ten_0 B$-module homomorphism. 


Next we construct a left $\Lambda$-module from $M$. For information on modules over matrix rings, we refer to \cite{Green82}. Now we have a left $(A\ten_0 B)$-module $A\ten_0 M$ (with the twisted tensor action given above) and a left $(\bar{A}\ten B)$-module $\bar{A}\ten M$ (with the usual tensor action). Moreover, we have
\begin{itemize}
\item[--] a left $A\ten_0 B$-module homomorphism
\[
\xi=\xi_M:(A\ten \rad(B))\ten_{\bar{A}\ten B}(\bar{A}\ten M)\to A\ten_0 M,~~(a\ten b)\ten(a'\ten m)\mapsto aa'\ten bm,
\]
for $a\ten b\in A\ten\rad(B)$ and $a'\ten b'\in\bar{A}\ten M$,
\item[--] a left $\bar{A}\ten B$-module homomorphism
\[
\eta=\eta_M:(\bar{A}\ten B)\ten_{A\ten_0 B}(A\ten_0 M)\to \bar{A}\ten M,~~(a\ten b)\ten (a'\ten m)\mapsto aa'_1\ten bm,
\]
for $a\ten b\in\bar{A}\ten B$ and $a'\ten m\in A\ten_0 M$.
\end{itemize}
They permit the following commutative diagrams
\begin{align*}
\xymatrix@C=1pc{
(\bar{A} \otimes B)\ten_{A\ten_0 B}(A \otimes \rad(B)) \otimes_{\bar{A} \otimes B} (\bar{A} \otimes M) \ar[rr]^(0.6){\id_{\bar{A}\ten B}\ten\xi} \ar[d]^{\psi\ten\id_{\bar{A}\ten M}} & &(\bar{A} \otimes B)\ten_{A\ten_0 B} (A\ten_0 M)\ar[d]^\eta\\
(\bar{A}\ten B)\ten_{\bar{A}\ten B}(\bar{A}\ten M)\ar[rr]^(0.55)\cong && \bar{A}\ten M
}
\\
\xymatrix@C=1pc{
(A\ten\rad(B))\ten_{\bar{A}\ten B}(\bar{A}\ten B)\ten_{A\ten_0 B}(A\ten_0 M)\ar[rr]^(0.6){\id_{A\ten\rad(B)}\ten\eta} \ar[d]^{\varphi\ten\id_{A\ten_0 M}} && (A\ten\rad(B))\ten_{\bar{A}\ten B} (\bar{A}\ten M)\ar[d]^\xi\\
(A\ten_0 B)\ten_{A\ten_0 B}(A\ten_0 M)\ar[rr]^(0.6)\cong && A\ten_0 M.
}
\end{align*}
So $\xi$ and $\eta$ define a left $\Lambda$-module structure on
\[
\begin{pmatrix}
A\ten_0 M\\ \bar{A}\ten M
\end{pmatrix}.
\]
We denote this module by $\widetilde{M}$. Note that $\widetilde{B}$ is naturally isomorphic to $\Lambda e_1$ as a left $\Lambda$-module.

Let $f:M\to N$ be a left $B$-module homomorphism. Then there are commutative diagrams
\begin{align*}
\xymatrix{
(\bar{A}\ten B)_{A\ten_0 B}(A\ten_0 M)\ar[r]^(0.7){\eta_M} \ar[d]^{\id_{\bar{A}\ten B}\ten (\id_A\ten f)}& \bar{A}\ten M\ar[d]^{\id_{\bar{A}}\ten f}\\
(\bar{A}\ten B)_{A\ten_0 B}(A\ten_0 N)\ar[r]^(0.7){\eta_N} & \bar{A}\ten N
}
\xymatrix{
(A\ten \rad(B))\ten_{\bar{A}\ten B} (\bar{A}\ten M)\ar[r]^(0.7){\xi_M}\ar[d]^{(\id_{A\ten \rad(B)})\ten (\id_{\bar{A}}\ten f)} & A\ten_0 M\ar[d]^{\id_A\ten f}\\
(A\ten\rad(B))\ten_{\bar{A}\ten B} (\bar{A}\ten N)\ar[r]^(0.7){\xi_N} & A\ten_0 N.
}
\end{align*}
It follows that
\[
\widetilde{f}:=\begin{pmatrix}
\id_A\ten f & \\
 & \id_{\bar{A}}\ten f
\end{pmatrix}
:\widetilde{M}
\longrightarrow
\widetilde{N}
\]
is a left $\Lambda$-module homomorphism.
Therefore, for a complex
\[
M^\bullet=\ldots\rightarrow M^{i-1}\stackrel{d^{i-1}}{\rightarrow} M^i\stackrel{d^i}{\rightarrow} M^{i+1}\rightarrow\ldots
\]
of left $B$-modules,
the sequence
\[
\widetilde{M^\bullet}:=\ldots\rightarrow
\widetilde{M^{i-1}}
\stackrel{\widetilde{d^{i-1}}}{\longrightarrow} \widetilde{M^{i}}
\stackrel{\widetilde{d^{i}}}{\longrightarrow} \widetilde{M^{i+1}}\rightarrow\ldots
\]
is a complex of left $\Lambda$-modules. Moreover, $M^\bullet$ is acyclic if and only if so is $\widetilde{M^\bullet}$ because as a complex of vector spaces $\widetilde{M^\bullet}$ is just $(A\oplus \bar{A})\ten M^\bullet$. Therefore we have

\begin{lemma}\label{lem:tensoring-projective-resolution-for-lambda}
Let $M$ be a left $B$-module and let $P^\bullet$ be a free resolution of $M$ over $B$. Then $\widetilde{P^\bullet}$ is a projective resolution of $\widetilde{M}$ over $\Lambda$ with all components in $\add(\Lambda e_1)$.
\end{lemma}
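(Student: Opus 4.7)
The plan is to combine three ingredients that are essentially built into the construction given just before the lemma: the functor $M\mapsto\widetilde{M}$ is additive (so preserves direct sums of free modules), it sends the regular left $B$-module $B$ to $\Lambda e_1$ (as already noted), and on underlying $k$-vector spaces $\widetilde{M^\bullet}$ is just $(A\oplus\bar{A})\ten M^\bullet$ (also already noted).

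First I would check that $M\mapsto\widetilde{M}$ preserves arbitrary direct sums. On each matrix component this reduces to the fact that $A\ten_0(-)$ and $\bar{A}\ten(-)$ commute with direct sums in $M$, since both are built from a $k$-tensor product with extra module structure on the first factor; and the structural maps $\xi_M$ and $\eta_M$ are manifestly functorial and additive in $M$, being defined by composing the canonical isomorphisms and embeddings listed in Section~\ref{ss:construction} with the identity on the ``$A$-side'' factor. Combined with the identification $\widetilde{B}\cong\Lambda e_1$, this gives, for any free left $B$-module $P\cong B^{(I)}$, an isomorphism $\widetilde{P}\cong(\Lambda e_1)^{(I)}$ of left $\Lambda$-modules; in particular all components of $\widetilde{P^\bullet}$ lie in $\add(\Lambda e_1)$.

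For the acyclicity, augment the free resolution to the exact sequence $\ldots\to P^{-1}\to P^0\to M\to 0$. Applying $\widetilde{(-)}$ and forgetting down to $k$-vector spaces yields $(A\oplus\bar{A})\ten_k(\ldots\to P^{-1}\to P^0\to M\to 0)$, which remains exact since tensoring over the field $k$ is exact. Exactness in $\Lambda\text{-}\Mod$ is detected on underlying $k$-vector spaces, so $\ldots\to\widetilde{P^{-1}}\to\widetilde{P^0}\to\widetilde{M}\to 0$ is exact, and $\widetilde{P^\bullet}$ is a projective resolution of $\widetilde{M}$.

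There is no substantial obstacle here: the lemma is essentially a book-keeping consequence of the functoriality established in the paragraphs preceding its statement. The only spot deserving modest attention is the compatibility of the non-diagonal structural maps $\xi$ and $\eta$ with direct sums, but this is immediate from their explicit formulas since every tensor product involved is ultimately a tensor product over $k$ (or over $\bar{A}\ten B$ or $A\ten_0 B$ in a way that collapses to such).
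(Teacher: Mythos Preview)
Your proposal is correct and follows the same approach as the paper, which also derives the lemma from the two observations that $\widetilde{B}\cong\Lambda e_1$ and that $\widetilde{M^\bullet}$ is $(A\oplus\bar{A})\ten M^\bullet$ on underlying vector spaces. You are somewhat more explicit than the paper about the direct-sum compatibility of $\widetilde{(-)}$, but this is a matter of detail rather than a genuinely different argument.
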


\begin{proof}[Proof of Proposition~\ref{p:stratifying}] In the second statement the existence of two stratifying recollements follows from the first statement since stratifying ideals induce stratifying recollements (Section~\ref{ss:stratifying-recollements}), and the forms of the algebras $\Lambda/\Lambda e_i\Lambda$ and $e_i\Lambda e_i$ ($i=1,2$) follow from Lemma~\ref{l:idempotents in Lambda}. To prove the first statement, we construct a projective resolution of the left $\Lambda$-module $\Lambda/ \Lambda e_i \Lambda$ ($i=1,2$) all of whose components but the one in degree $0$ belong to $\add(\Lambda e_i)$. The statement follows then by Lemma \ref{l:criterion for stratifying ideal}.

Multiplying $\begin{pmatrix} 0 & 0\\ 1\ten 1 & 0\end{pmatrix}$ on the right defines an injective $\Lambda$-module homomorphism
\[
\Lambda e_2\longrightarrow \Lambda e_1
\]
with quotient $\Lambda/\Lambda e_2\Lambda$. So there is a short exact sequence
\[
0\rightarrow \Lambda e_2\rightarrow \Lambda e_1\rightarrow \Lambda/\Lambda e_2\Lambda \rightarrow 0,
\]
which is a projective $\Lambda$-resolution of $\Lambda/\Lambda e_2\Lambda$ with the desired property.

Consider the projection from $\Lambda e_2$ to $\Lambda/\Lambda e_1\Lambda$. Its kernel is naturally isomorphic to
\[
\widetilde{\rad(B)}=\begin{pmatrix}A\ten_0 \rad(B) \\ \bar{A}\ten \rad(B)\end{pmatrix}.
\]
By Lemma~\ref{lem:tensoring-projective-resolution-for-lambda}, $\widetilde{\rad(B)}$ admits a projective $\Lambda$-resolution with all components in $\add(\Lambda e_1)$. This together with the projection from $\Lambda e_2$ to $\Lambda / \Lambda e_1 \Lambda$ provides a projective $\Lambda$-resolution of $\Lambda / \Lambda e_1 \Lambda$ with the desired property.
\end{proof}

\begin{corollary} The following statements are equivalent: \label{c:finite global dimension}
\begin{itemize}
\item[(i)] both $A$ and $B$ have finite global dimension;

\item[(ii)] the matrix algebra $\Lambda$ has finite global dimension;

\item[(iii)] the trivially twisted tensor product $A \otimes_0 B$ has finite global dimension.
\end{itemize}
\end{corollary}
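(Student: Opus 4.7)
The plan is to deduce all three equivalences directly from the two stratifying recollements $(R1)$ and $(R2)$ of Proposition~\ref{p:stratifying}, together with Proposition~\ref{prop:invariants}(i), which says that the middle term of a recollement has finite global dimension if and only if both outer terms do. Since all three statements turn out to be equivalent to ``$\Lambda$ has finite global dimension'', the cleanest route is to show (ii)$\Leftrightarrow$(i) via $(R2)$ and (ii)$\Leftrightarrow$(iii) via $(R1)$.

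First, I would treat the equivalence (ii)$\Leftrightarrow$(i) using $(R2)$. The recollement $(R2)$ writes $\cd(\Lambda)$ as a gluing of $\cd(\Lambda/\Lambda e_2\Lambda)$ and $\cd(e_2\Lambda e_2)$. By Lemma~\ref{l:idempotents in Lambda} and Proposition~\ref{p:stratifying}, $\Lambda/\Lambda e_2\Lambda\cong A\otimes\bar B$ is a finite product of copies of $A$, while $e_2\Lambda e_2=\bar A\otimes B$ is a finite product of copies of $B$. Hence each of these outer algebras has finite global dimension if and only if $A$ (respectively $B$) does, and Proposition~\ref{prop:invariants}(i) then yields that $\Lambda$ has finite global dimension precisely when both $A$ and $B$ do.

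Next I would turn to (ii)$\Leftrightarrow$(iii) using $(R1)$. Here the outer terms are $\cd(\Lambda/\Lambda e_1\Lambda)\cong \cd(\bar A\otimes\bar B)$ and $\cd(e_1\Lambda e_1)=\cd(A\otimes_0 B)$. Since $\bar A\otimes\bar B$ is a direct product of copies of $k$, it is semisimple and in particular of finite (in fact zero) global dimension. Applying Proposition~\ref{prop:invariants}(i) once more, $\Lambda$ has finite global dimension if and only if $A\otimes_0 B$ does. Combining the two equivalences finishes the proof.

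There is essentially no obstacle here: all the real content has already been put in place by Proposition~\ref{p:stratifying} (the existence of the two stratifying recollements together with the identification of their outer terms). The only thing to verify along the way is the elementary fact that a finite product of algebras has finite global dimension if and only if each factor does, which I would simply invoke without further comment.
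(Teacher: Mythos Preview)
Your proposal is correct and follows exactly the same approach as the paper: apply Proposition~\ref{prop:invariants}(i) to the two stratifying recollements of Proposition~\ref{p:stratifying}, using $(R2)$ for (i)$\Leftrightarrow$(ii) and $(R1)$ for (ii)$\Leftrightarrow$(iii). The paper's proof is a single sentence to this effect; you have simply spelled out the identifications of the outer terms and the trivial observation about finite products more explicitly.
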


\begin{proof} Applying Proposition~\ref{prop:invariants} (i) to the stratifying recollements $(R1)$ and $(R2)$ in Proposition~\ref{p:stratifying} we obtain the equivalence between (i) and (ii)  and between (i) and (iii), respectively.
\end{proof}

The following theorem is the main result of this paper.

\begin{theorem}  In the following two cases the matrix algebra $\Lambda$ does not satisfy the derived Jordan--H\"older property:
\begin{itemize}
\item[(1)] both $A$ and $B$ are derived simple but nonsimple;

\item[(2)] one of $A$ and $B$ is derived simple but nonsimple, and the other one has infinite global dimension.
\end{itemize} \label{t:maintheorem}
\end{theorem}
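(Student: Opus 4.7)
The plan is to compare, in each of the two cases, two stratifications of $\cd(\Lambda)$ obtained by extending the stratifying recollements $(R1)$ and $(R2)$ of Proposition~\ref{p:stratifying}, using as distinguishing invariant the number of simple factors derived equivalent to $k$. A preliminary fact I would record is that no nonsimple elementary algebra $C$ is derived equivalent to $k$: when $\rank(C)\geq 2$ this contradicts the invariance of rank under derived equivalence (Proposition~\ref{prop:invariants}(ii)); when $\rank(C)=1$, $C$ is local Artinian and nonsimple, hence of infinite global dimension, while $k$ has global dimension $0$.

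First I would extend $(R1)$ to a stratification $(S1)$. Since $\bar{A}\otimes\bar{B}$ is a product of $\rank(A)\rank(B)$ copies of $k$ and $k$ is derived simple, Lemma~\ref{lem:splitting-stratification} shows that any stratification of $\cd(\bar{A}\otimes\bar{B})$ contributes exactly $\rank(A)\rank(B)$ copies of $k$ to the simple factors; combining this with any stratification of $\cd(A\otimes_0 B)$ yields $(S1)$, whose multiset of simple factors contains at least $\rank(A)\rank(B)\geq 1$ copies of $k$. Similarly, I would extend $(R2)$ to a stratification $(S2)$ by decomposing $\cd(A\otimes\bar{B})$ into pseudostratifications of $\rank(B)$ copies of $\cd(A)$, and $\cd(\bar{A}\otimes B)$ into pseudostratifications of $\rank(A)$ copies of $\cd(B)$, via Lemma~\ref{lem:splitting-stratification}.

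In case~(1), both $A$ and $B$ are derived simple, so the pseudostratifications occurring in $(S2)$ are all trivial and the simple factors of $(S2)$ are precisely $\rank(B)$ copies of $A$ together with $\rank(A)$ copies of $B$. By the preliminary fact none of these is derived equivalent to $k$, so $(S2)$ contains no copy of $k$, whereas $(S1)$ contains at least $\rank(A)\rank(B)\geq 1$ copies of $k$. Hence DJHP fails.

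In case~(2), assume without loss of generality that $A$ is derived simple nonsimple and $B$ has infinite global dimension. As in case~(1), the $\cd(A)$-side of $(S2)$ contributes no copy of $k$, so all copies of $k$ in $(S2)$ arise from the $\rank(A)$ pseudostratifications of $\cd(B)$. Iterating Proposition~\ref{prop:invariants}(ii) shows that the ranks of the simple factors of any such pseudostratification sum to $\rank(B)$, so each pseudostratification contains at most $\rank(B)$ copies of $k$, giving at most $\rank(A)\rank(B)$ copies in $(S2)$. If DJHP held, this upper bound would coincide with the lower bound $\rank(A)\rank(B)$ for $(S1)$, forcing each pseudostratification of $\cd(B)$ to consist of exactly $\rank(B)$ copies of $k$ and nothing else; iterated application of Proposition~\ref{prop:invariants}(i) would then force $B$ to have finite global dimension, a contradiction. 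The main obstacle is case~(2), where this rank-bound counting together with Lemma~\ref{lem:splitting-stratification} is essential.
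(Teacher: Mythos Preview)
Your proposal is correct and follows essentially the same approach as the paper: both exploit the two stratifying recollements $(R1)$ and $(R2)$ from Proposition~\ref{p:stratifying} and compare the multiplicity of $k$ among the simple factors, using Lemma~\ref{lem:splitting-stratification} to split the product algebras and Proposition~\ref{prop:invariants} for the rank and global-dimension bookkeeping. The only stylistic difference is in case~(2): the paper argues directly that any stratification $\cs_B$ of $\cd(B)$ satisfies $s(\cs_B)<\rank(B)$ (since equality would force, by the rank additivity and Proposition~\ref{prop:invariants}(i), that $B$ has finite global dimension), whereas you reach the same conclusion by contradiction after assuming DJHP holds; the content is identical.
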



\begin{proof} For a stratification $\cs$ of $\cd(A)$ let $s(\cs)$ denote the multiplicity of $k$ as simple factors of $\cs$.

Consider the two stratifying recollements $(R1)$ and $(R2)$ in Proposition~\ref{p:stratifying}. Recall that $\Lambda/\Lambda e_1\Lambda\cong \bar{A}\ten \bar{B}$, $e_1\Lambda e_1= A\ten_0 B$, $\Lambda/\Lambda e_2\Lambda \cong A\ten \bar{B}$ and $e_2\Lambda e_2= \bar{A}\ten B$. By \cite[Lemma 7.1]{AKLY2}, $s(\cs_{\Lambda/\Lambda e_1\Lambda})=\rank(A)\rank(B)$ for any stratification $\cs_{\Lambda/\Lambda e_1\Lambda}$ of $\cd(\Lambda/\Lambda e_1\Lambda)$ and hence $s(\cs_1)\geq \rank(A)\rank(B)$ for any stratification $\cs_1$ of $\cd(A)$ refining $(R1)$.

We will show that in both cases (1) and (2), $s(\cs_2)<\rank(A)\rank(B)$ for any stratification $\cs_2$ refining the recollement $(R2)$. It follows that DJHP fails for $\Lambda$.
In both cases we may assume that $A$ is derived simple but nonsimple. Then it follows from \cite[Lemma 7.1]{AKLY2} that any stratification of $\cd(\Lambda/\Lambda e_2\Lambda)$ has simple factors $\rank(B)$ copies of $A$. Therefore it suffices to show that $s(\cs_{e_2\Lambda e_2})<\rank(A)\rank(B)$ for any stratification $\cs_{e_2\Lambda e_2}$ of $\cd(e_2\Lambda e_2)$. In view of Lemma~\ref{lem:splitting-stratification}, this is equivalent to showing that $s(\cs_{B})<\rank(B)$ for any stratification $\cs_{B}$ of $\cd(B)$. This is true in case (1) because $B$ is derived simple nonsimple. In case (2) this follows from Proposition~\ref{prop:invariants} since $B$ has infinite global dimension.
\end{proof}

\begin{remark}
The \emph{length} of a stratification is the number of its simple factors. In case (1) of Theorem~\ref{t:maintheorem}, if $A$ and $B$ are not local, namely, the ranks of $A$ and $B$ are greater than $1$, then the length of any stratification refining $(R1)$ is different from that of any stratification refining $(R2)$. Indeed, it follows from the proof of Theroem~\ref{t:maintheorem} that the length of any stratification refining $(R1)$ is greater than or equal to $1+\rank(A)\rank(B)$, while the length of any stratification refining $(R2)$ is precisely $\rank(A)+\rank(B)$.
\end{remark}

\begin{remark}
If $A$ (respectively, $B$) does not satisfy DJHP, nor does $\Lambda$. Indeed, we can start with the recollement $(R2)$ and obtain stratifications with different simple factors. Assume that both $A$ and $B$ satisfy DJHP. Following the proof of Theorem~\ref{t:maintheorem} we see that $\Lambda$ does not satisfy DJHP if $s(\cs_A)\rank(B)+s(\cs_B)\rank(A)<\rank(A)\rank(B)$ for a/any stratification $\cs_A$ of $\cd(A)$ and a/any stratification $\cs_B$ of $\cd(B)$. For example, this is the case if a/any stratification of $\cd(A)$ (respectively, $\cd(B)$) has no simple factors $k$, and $B$ (respectively, $A$) has infinite global dimension or a/any stratification of $\cd(B)$ (respectively, $\cd(A)$) has length strictly smaller than $\rank(B)$.
\end{remark}

Proposition~\ref{prop:DJHP-for-2-vertex-alg-with-finite-global-dimension} states that over an algebraically closed field all rank $2$ algebras of finite global dimension satisfy DJHP. This is far from being true for rank $2$ algebras of infinite global dimension.

\begin{corollary}\label{cor:local} Assume that both $A$ and $B$ are local nonsimple. Then $\Lambda$ does not satisfy DJHP.
\end{corollary}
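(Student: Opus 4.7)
The plan is to observe that this corollary is essentially an immediate specialization of case (1) of Theorem~\ref{t:maintheorem}. First I would recall the fact, stated in the subsection on derived simplicity, that every local algebra is derived simple (\cite[Proposition 4.11]{AKL2}). Consequently, under the hypothesis that $A$ and $B$ are both local and nonsimple, we automatically have that $A$ and $B$ are both derived simple and nonsimple, placing us in the setting of Theorem~\ref{t:maintheorem}(1).

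Once this is in place, the conclusion that $\Lambda = \Lambda(A,B)$ does not satisfy DJHP is a direct quotation of Theorem~\ref{t:maintheorem}(1). So the proof is a one-liner: local nonsimple implies derived simple nonsimple, apply the theorem.

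I do not anticipate any obstacle here: there is no calculation to perform, no auxiliary construction to set up, and no case analysis. The only thing worth flagging (if desired) is the explicit attribution to \cite{AKL2} for derived simplicity of local algebras, so that the reader can see why the hypotheses of the corollary trigger the stronger-looking hypothesis of the theorem.
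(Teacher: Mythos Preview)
Your proposal is correct and matches the paper's own proof essentially verbatim: the paper also simply invokes Theorem~\ref{t:maintheorem}(1) together with the fact from \cite[Proposition 4.11]{AKL2} that local algebras are derived simple.
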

\begin{proof}
This follows from Theorem~\ref{t:maintheorem} because local algebras are derived simple by \cite[Proposition 4.11]{AKL2}.
\end{proof}

We will see in Example~\ref{ex:akly2-example} that $\Lambda(k[x]/(x^2),k[x]/(x^2))$ is the algebra in \cite[Example 7.6]{AKLY2} which does not satisfy DJHP.

Examples of derived simple algebras of positive finite global dimension are given in \cite{H,Membrillo-Hernandez94,LY16}. So the next corollary shows that there are algebras of finite global dimension which do not satisfy DJHP.
In \cite[Proposition 3.4]{K} Kalck provides a family of finite dimensional algebras of global dimension 2 (hence quasi-hereditary) for which DJHP fails.

\begin{corollary}\label{cor:finite-global-dimension} Assume that both $A$ and $B$ are derived simple of positive finite global dimension. Then $\Lambda$ has finite global dimension and does not satisfy DJHP.
\end{corollary}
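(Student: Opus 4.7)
The plan is to derive this corollary directly from the two main tools already established earlier in the section, namely Corollary~\ref{c:finite global dimension} and case (1) of Theorem~\ref{t:maintheorem}. There is essentially no new content to prove; the only thing to verify is that the hypotheses of those results are met.

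First I would handle finiteness of global dimension. By hypothesis both $A$ and $B$ have finite global dimension, so condition (i) of Corollary~\ref{c:finite global dimension} holds, which immediately gives condition (ii): $\Lambda$ has finite global dimension. This takes care of the first assertion.

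Next, to apply Theorem~\ref{t:maintheorem}(1), I must check that $A$ and $B$ are derived simple but nonsimple. Derived simplicity is assumed. For nonsimplicity, the key observation is that a (finite dimensional) simple algebra, being a matrix algebra over a division ring, has global dimension $0$; since both $A$ and $B$ have \emph{positive} global dimension, neither can be simple. Hence case (1) of Theorem~\ref{t:maintheorem} applies and yields that $\Lambda=\Lambda(A,B)$ does not satisfy DJHP.

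There is no serious obstacle here; the only small point worth being explicit about is the implication ``positive global dimension implies nonsimple'', which is why the hypothesis is stated with ``positive finite global dimension'' rather than merely ``finite global dimension''. Concrete witnesses for the hypotheses exist: the Fibonacci algebras of \cite{H,Membrillo-Hernandez94} and their generalisations in \cite{LY16} are finite dimensional elementary, derived simple, and of positive finite global dimension, so $\Lambda(A,B)$ built from two such algebras provides an explicit family of finite dimensional algebras of finite global dimension for which DJHP fails.
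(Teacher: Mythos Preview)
Your proposal is correct and matches the paper's own proof: both deduce finite global dimension from Corollary~\ref{c:finite global dimension} and then invoke Theorem~\ref{t:maintheorem}(1), with the observation that positive global dimension forces nonsimplicity. The extra remarks about Fibonacci algebras are not needed for the proof itself but are consistent with the paper's surrounding discussion.
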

\begin{proof}
By Theorem~\ref{t:maintheorem}, it suffices to show that $\Lambda$ has finite global dimension,  which follows from Corollary \ref{c:finite global dimension}.
\end{proof}

The next corollary together with Lemma~\ref{lem:recovering-B} shows that the cardinality of the set of elementary algebras which do not satisfy DJHP is the same as the cardinality of the set of all elementary algebras.

\begin{corollary}\label{cor:cardinality} Assume that $A$ is local nonsimple. Then $\Lambda(A,\Lambda(A,B))$ does not satisfy DJHP.
\end{corollary}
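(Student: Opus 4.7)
Set $C:=\Lambda(A,B)$. Since $A$ and $B$ are elementary, Lemma~\ref{l:idempotents in Lambda}(iii) ensures that $C$ is again elementary, so $\Lambda(A,C)$ is defined and the statement makes sense. The plan is simply to verify the hypotheses of case~(2) of Theorem~\ref{t:maintheorem} for the pair $(A,C)$.

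The first ingredient is given: since $A$ is local, it is derived simple by \cite[Proposition 4.11]{AKL2}, and by hypothesis it is nonsimple. Hence $A$ is derived simple but not simple, which matches one of the two roles in case~(2) of Theorem~\ref{t:maintheorem}.

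The second ingredient is that $C$ has infinite global dimension. The key observation is that a finite dimensional local algebra has finite global dimension if and only if it is semisimple, hence if and only if it is simple; thus our local nonsimple algebra $A$ has infinite global dimension. Applying the equivalence (i)$\Leftrightarrow$(ii) of Corollary~\ref{c:finite global dimension} to $\Lambda(A,B)$, and using that $A$ has infinite global dimension, we conclude that $C=\Lambda(A,B)$ has infinite global dimension as well.

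Combining these two points, $(A,C)$ satisfies the hypothesis of case~(2) of Theorem~\ref{t:maintheorem}, so $\Lambda(A,C)=\Lambda(A,\Lambda(A,B))$ does not satisfy DJHP. There is no genuine obstacle here: the corollary is essentially an assembly of Theorem~\ref{t:maintheorem}, Corollary~\ref{c:finite global dimension}, and the elementary observation about local algebras of finite global dimension; its role in the paper is mainly to feed Lemma~\ref{lem:recovering-B} in order to establish the cardinality statement preceding it.
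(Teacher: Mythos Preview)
Your proof is correct and follows exactly the same approach as the paper's: apply case~(2) of Theorem~\ref{t:maintheorem} with the pair $(A,\Lambda(A,B))$, using that $A$ is derived simple nonsimple (being local nonsimple) and that $\Lambda(A,B)$ has infinite global dimension via Corollary~\ref{c:finite global dimension}. You have simply made explicit the steps the paper leaves to the reader, including why $A$ has infinite global dimension and why $\Lambda(A,B)$ is elementary so that the construction is defined.
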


\begin{proof}
By Theorem~\ref{t:maintheorem}, it suffices to show that $\Lambda(A,B)$ has infinite global dimension,  which follows from Corollary \ref{c:finite global dimension}.
\end{proof}


\section{Quiver and relations}\label{s:quiver-relation}

Let $A$ and $B$ be elementary algebras and let
$$\Lambda=\Lambda(A,B) := \begin{pmatrix} A \otimes_0 B & A \otimes \rad(B) \\ \bar{A} \otimes B & \bar{A} \otimes B \end{pmatrix},$$
be defined as in Section~\ref{ss:construction}. The aim of this section is to describe the quiver with relations of $\Lambda$ in terms of those of $A$ and $B$.

\begin{theorem}\label{thm:quiver-with-relations}
Let $A = kQ /(\mathcal{R})$ and $B = kP /(\mathcal{S})$, where $Q$ and $P$ are finite quivers, and $\mathcal{R}$ and $\cs$ are sets of minimal relations of $A$ and $B$, respectively. Then as an algebra $\Lambda$ is isomorphic to $k\Gamma/(\ci)$, where
\begin{itemize}
\item
the set $\Gamma_0$ of vertices  is $\{i^j,~j^i\mid i\in Q_0, j\in P_0\}$, namely, it is two copies of $Q_0\times P_0$;
\item the set $\Gamma_1$ consists of three types of arrows:
\begin{itemize}
\item $\alpha^j: \mathrm{s}(\alpha)^j \ra \mathrm{t}(\alpha)^j$, for $\alpha\in Q_1$ and $j\in P_0$,

\item $c(i,j): i^j \ra j^i$, for $i\in Q_0$ and $j\in P_0$,

\item $\beta^i: \mathrm{s}(\beta)^i \ra i^{\mathrm{t}(\beta)}$, for $\beta \in P_1$ and $i\in Q_0$;
\end{itemize}
\item
the set $\mathcal{I}$ consists of three types of relations:
\begin{itemize}
\item $r^j$, for $r\in\mathcal{R}$ and $j\in P_0$,

\item $c(i,j) \alpha^j$, for $i\in Q_0$, $j\in P_0$ and $\alpha \in Q_1$ with $\mathrm{t}(\alpha)=i$,

\item $s^i$, for $s\in \mathcal{S}$ and $i\in Q_0$,
\end{itemize}
where
\begin{align*}r^j& =\sum \lambda_{i_1,\ldots, i_s} \alpha_{i_1}^j \cdots \alpha_{i_s}^j
\end{align*}
for $r = \sum \lambda_{i_1, \ldots, i_s} \alpha_{i_1} \cdots \alpha_{i_s} \in \mathcal{R}$ and $j\in P_0$, and
\begin{align*}s^i &= \sum \mu_{j_1,\ldots,j_t} \beta_{j_1}^i c(i,\mathrm{s}(\beta_{j_1})) \beta_{j_2}^i c(i,\mathrm{s}(\beta_{j_2}))\cdots \beta_{j_{t-1}}^i c(i,\mathrm{s}(\beta_{j_{t-1}})) \beta_{j_t}^i\\
&=\sum \mu_{j_1,\ldots,j_t} \beta_{j_1}^i c \beta_{j_2}^i c\cdots \beta_{j_{t-1}}^i c \beta_{j_t}^i
\end{align*} for $s = \sum \mu_{j_1, \ldots, j_t} \beta_{j_1} \cdots \beta_{j_t} \in \mathcal{S}$ and $i \in Q_0$, where $c=\sum_{i\in Q_0,j\in P_0}c(i,j)$.
\end{itemize}
\end{theorem}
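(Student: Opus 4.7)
My plan is to construct an algebra homomorphism $\phi: k\Gamma \to \Lambda$ on generators, verify it kills $\ci$, and then show the induced map $\bar\phi: k\Gamma/(\ci) \to \Lambda$ is an isomorphism. Let $\{e_i\}_{i\in Q_0}$ and $\{f_j\}_{j\in P_0}$ denote the trivial paths in $A$ and $B$, viewed as a complete set of primitive orthogonal idempotents of $\bar{A}$ and $\bar{B}$. On vertices set $\phi(e_{i^j})=\begin{pmatrix} e_i\otimes f_j & 0 \\ 0 & 0 \end{pmatrix}$ and $\phi(e_{j^i})=\begin{pmatrix} 0 & 0 \\ 0 & e_i\otimes f_j \end{pmatrix}$; by the proof of Lemma~\ref{l:idempotents in Lambda}(iii), these form a complete set of primitive orthogonal idempotents of $\Lambda$. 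On arrows set $\phi(\alpha^j)=\begin{pmatrix} \alpha\otimes f_j & 0 \\ 0 & 0 \end{pmatrix}$, $\phi(c(i,j))=\begin{pmatrix} 0 & 0 \\ e_i\otimes f_j & 0 \end{pmatrix}$, and $\phi(\beta^i)=\begin{pmatrix} 0 & e_i\otimes\beta \\ 0 & 0 \end{pmatrix}$. A direct check using the multiplication on $\Lambda$ described in Section~\ref{ss:construction} shows that each image lies in the correct corner $e_{\mathrm{t}(\cdot)}\Lambda e_{\mathrm{s}(\cdot)}$, so $\phi$ extends uniquely to a $k$-algebra map $k\Gamma \to \Lambda$.

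Next I verify $\phi(\ci)=0$ type by type. For $r\in\mathcal{R}$, the whole calculation happens in block $(1,1)$: using the formula for $\mu_\tau$ together with $f_j\in\bar{B}$, one finds $\phi(r^j)=\begin{pmatrix} r\otimes f_j & 0 \\ 0 & 0 \end{pmatrix}=0$. For $c(i,j)\alpha^j$ with $\mathrm{t}(\alpha)=i$, the product lives in block $(2,1)$ and uses the right action of $A\otimes_0 B$ on $\bar{A}\otimes B$, which by its very definition factors through $A\otimes_0 B\twoheadrightarrow\bar{A}\otimes B$; since $\alpha\in\rad(A)$ is killed by this quotient, the relation holds. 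For $s\in\cs$, an induction on $t$ using the bimodule maps $\varphi,\psi$ shows $\phi(\beta_{j_1}^i c\beta_{j_2}^i c\cdots c\beta_{j_t}^i)=\begin{pmatrix} 0 & e_i\otimes \beta_{j_1}\beta_{j_2}\cdots\beta_{j_t} \\ 0 & 0 \end{pmatrix}$, so $\phi(s^i)=e_i\otimes s=0$. Hence $\phi$ descends to $\bar\phi:k\Gamma/(\ci)\to\Lambda$.

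It remains to show $\bar\phi$ is bijective. The key observation is that the relations $c(i,j)\alpha^j=0$ force every $c$-arrow in a surviving path of $\Gamma$ to be immediately preceded, in source-to-target order, either by the trivial path at its source or by a $\beta$-arrow. Combined with the structural facts that second-copy vertices are only entered by $c$-arrows and only exited by $\beta$-arrows, a direct case analysis classifies the nonzero path classes in $k\Gamma/(\ci)$ into four families parameterised respectively by: (i) a pair $(p_A,p_B)$ of basis paths in $A$ and $B$, mapping to $p_A\otimes p_B$ in block $(1,1)=A\otimes_0 B$; (ii) a pair $(p_A,p_B)$ with $p_B\in\rad(B)$, mapping to $p_A\otimes p_B$ in block $(1,2)=A\otimes\rad(B)$; (iii)--(iv) a pair $(i,p_B)$ with $i\in Q_0$ and $p_B$ a basis path in $B$, mapping to $e_i\otimes p_B$ in blocks $(2,1)$ and $(2,2)$, each isomorphic to $\bar{A}\otimes B$. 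The main technical work, and the main obstacle, is exhibiting this bijection explicitly between surviving paths and the natural bases of the four blocks; once this is done, linear independence in $k\Gamma/(\ci)$ follows automatically from the linear independence of the images in $\Lambda$, so $\bar\phi$ is an isomorphism.
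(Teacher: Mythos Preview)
Your construction of $\phi$ and verification that $\ci\subseteq\ker\phi$ are identical to the paper's. The difference lies in how bijectivity of $\bar\phi$ is established. You classify surviving paths in $k\Gamma/(\ci)$ combinatorially: the monomial relations $c(i,j)\alpha^j=0$ force every nonzero path into one of four block-shapes, each parameterised by a $Q$-path together with a $P$-path (or a vertex of $Q$ together with a $P$-path); the relations $r^j$ and $s^i$ then reduce these to chosen path bases of $A$ and $B$, and the resulting spanning set maps onto the evident tensor bases of the four matrix blocks of $\Lambda$. The paper instead decomposes $\bar\phi$ into the four corner maps $\Phi_{pq}:e_p\tilde\Lambda e_q\to e_p\Lambda e_q$ and treats each separately: for the diagonal corners it argues via presentations of $A\otimes_0 B$ and $\bar A\otimes B$; for the $(2,1)$ corner it uses a surjectivity-plus-dimension count; and for the $(1,2)$ corner it invokes a separate lemma (right multiplication by an arrow $c$ is injective on $k\Gamma/(\ci)$ whenever no relation in $\ci$ has the form $pc$) together with a commutative square reducing $\Phi_{12}$ to the already-established $\Phi_{11}$. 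Your route is uniform across all four blocks and avoids the auxiliary lemma, but you must carry out the spanning argument carefully (in particular, checking that the ideal $(\ci)$ really allows one to reduce the $P$-part via the $s^i$ after inserting or removing the surrounding $c$'s); the paper's route sidesteps that bookkeeping at the price of isolating and proving the injectivity lemma.
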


Before proving Theorem~\ref{thm:quiver-with-relations}, we first give two examples illustrating the construction of the quiver with relations in the theorem.

\begin{example}
Let $Q$ and $P$ be the following quivers with relations
\begin{align*}\xymatrix{1\ar[r]^{\alpha_1} & 2\ar[r]^{\alpha_2} & 3\ar[r]^{\alpha_3} & 4} ,&~~\alpha_3\alpha_2\alpha_1\\
\xymatrix{5\ar[r]^{\beta_1} & 6\ar[r]^{\beta_2} & 7 },&~~\beta_2\beta_1.
\end{align*}
Then $\Gamma$ is given by
\[
\xymatrix@C=0.7pc@R=0.5pc{
  & & && & & {\color{green}1^5} \ar[rrrr]|{\alpha_1^5} \ar@[blue]@{-->}'[dd]|(0.75){\color{blue}c(1,5)}[ddd]  &  & & & {\color{green}2^5} \ar@[blue]@{-->}'[dd]|(0.75){\color{blue}c(2,5)}[ddd] \ar[rrrr]|{\alpha_2^5} &  & & & {\color{green}3^5}\ar@[blue]@{-->}'[dd]|(0.75){\color{blue}c(3,5)}[ddd] \ar[rrrr]|{\alpha_3^5} &  & && {\color{green}4^5} \ar@[blue]@{-->}[ddd]|(0.45){\color{blue}c(4,5)}      \\ \\
  & &&{\color{green}1^6} \ar[rrrr]|{\alpha_1^6}\ar@[blue]@{-->}'[dd]|(0.75){\color{blue}c(1,6)}[ddd]      & & &  & {\color{green}2^6} \ar@[blue]@{-->}'[dd]|(0.75){\color{blue}c(2,6)}[ddd] \ar[rrrr]|{\alpha_2^6} & & & & {\color{green}3^6} \ar@[blue]@{-->}'[dd]|(0.75){\color{blue}c(3,6)}[ddd] \ar[rrrr]|{\alpha_3^6} & & & & {\color{green}4^6} \ar@[blue]@{-->}[ddd]|(0.47){\color{blue}c(4,6)} \\
  &  &&& && {\color{red}5^1} \ar@[violet]@{..>}[ulll]|{\color{violet}\beta_1^1}      &  & & & {\color{red}5^2} \ar@[violet]@{..>}[ulll]|{\color{violet}\beta_1^2}   &  & &&  {\color{red}5^3} \ar@[violet]@{..>}[ulll]|{\color{violet}\beta_1^3}  & & & & {\color{red}5^4} \ar@[violet]@{..>}[ulll]|{\color{violet}\beta_1^4}           \\
  {\color{green}1^7} \ar[rrrr]|{\alpha_1^7}\ar@[blue]@{-->}[ddd]|{\color{blue}c(1,7)} &  & &&  {\color{green}2^7} \ar[rrrr]|{\alpha_2^7} \ar@[blue]@{-->}[ddd]|{\color{blue}c(2,7)} &  & & & {\color{green}3^7} \ar[rrrr]|{\alpha_3^7} \ar@[blue]@{-->}[ddd]|{\color{blue}c(3,7)} &  & && {\color{green}4^7} \ar@[blue]@{-->}[ddd]|{\color{blue}c(4,7)} \\
  & & & {\color{red}6^1} \ar@[violet]@{..>}[ulll]|{\color{violet}\beta_2^1}      &  & & & {\color{red}6^2} \ar@[violet]@{..>}[ulll]|{\color{violet}\beta_2^2}  &  & && {\color{red}6^3} \ar@[violet]@{..>}[ulll]|{\color{violet}\beta_2^3}  & & & & {\color{red}6^4} \ar@[violet]@{..>}[ulll]|{\color{violet}\beta_2^4}  \\  \\
  {\color{red}7^1} & &&& {\color{red}7^2} &&& & {\color{red}7^3} &&& & {\color{red}7^4} }
\]
and $\ci$ consists of
\begin{itemize}
\item[--] $\alpha_3^j\alpha_2^j\alpha_1^j$, $j=5,6,7$;

\item[--] $c(i,j)\alpha_{i-1}^j$, $i=2,3,4$, $j=5,6,7$;

\item[--] $\beta_2^i c(i,6) \beta_1^i$, $i=1,2,3,4$.
\end{itemize}

\end{example}

\begin{example}\label{ex:akly2-example}
Let $A=k[x]/(x^2)$ and $B=k[y]/(y^2)$. The quiver of $A$ (respectively, $B$) has a unique vertex, denoted by $a$ (respectively, $b$), and one loop $\alpha$ (respectively, $\beta$). By Theorem~\ref{thm:quiver-with-relations}, the quiver with relations of $\Lambda$ is
\[
\xymatrix@R=0.2pc{{\color{green}a^b}\ar@(ul,dl)_{\alpha^b}\ar@[blue]@{-->}@<.7ex>[ddd]^{\color{blue}c(a,b)} \\
&,~~\beta^a c(a,b) \beta^a,(\alpha^b)^2,c(a,b)\alpha^b.
\\
\\
{\color{red}b^a}\ar@[violet]@{..>}@<.7ex>[uuu]^{\color{violet}\beta^a}}
\]
As $A$ and $B$ are both local nonsimple, DJHP fails for $\Lambda$ by Corollary~\ref{cor:local}. Note that $\Lambda$ is of rank $2$, so a stratification of $\mathcal{D}(\Lambda)$ is just a recollement. The factors of $(R1)$ are $\bar{A}\ten\bar{B}= k$ and $A\ten_0 B\cong k\langle x,y\rangle/(x^2,y^2,yx)$, while the factors of $(R2)$ are  $A\ten\bar{B}= k[x]/x^2$ and $\bar{A}\ten B= k[y]/y^2$, as also shown in \cite[Example 7.6]{AKLY2}.
\end{example}

The rest of this section is devoted to the proof of Theorem~\ref{thm:quiver-with-relations}.

\begin{proof}[Proof of Theorem~\ref{thm:quiver-with-relations}]
Recall from the proof of Lemma~\ref{l:idempotents in Lambda} (iii) that
\begin{align*}
\rad(\Lambda)&=\begin{pmatrix}
A\ten\rad(B)+\rad(A)\ten B & A\ten \rad(B)\\
\bar{A}\ten B & \bar{A}\ten \rad(B)
\end{pmatrix}\\
& = \begin{pmatrix} (\rad A \otimes \bar{B}) \oplus (\bar{A} \otimes \rad B) \oplus (\rad A \otimes_0 \rad B) & A \otimes \rad B \\ \bar{A} \otimes B & \bar{A} \otimes \rad B \end{pmatrix}
\end{align*}
and
\[
\bar{\Lambda}=\begin{pmatrix}
\bar{A}\ten \bar{B} & 0\\
0 & \bar{A}\ten \bar{B}
\end{pmatrix}.
\]

We write $e_i$ ($i\in Q_0$) and $f_j$ ($j\in P_0$) for the primitive idempotents of $A$ and $B$ respectively, and $e_{i^j}, e_{j^i}$ the primitive idempotents in $k\Gamma$ corresponding to $i^j$ and $j^i$. Then the map
\[
e_{i^j}\mapsto \begin{pmatrix} e_i \otimes f_j & 0 \\ 0 & 0 \end{pmatrix} , \quad e_{j^i}\mapsto\begin{pmatrix} 0 & 0 \\ 0 & e_i \otimes f_j \end{pmatrix},\quad (i\in Q_0,~j\in P_0)
\]
defines an algebra isomorphism from $k\Gamma_0$ to $\bar{\Lambda}$. We identify these two semisimple algebras via this isomorphism.

The square of the radical of $\Lambda$ is
{\footnotesize
\begin{align*}
\rad^2 \Lambda & =  (\rad \Lambda)^2 \\
& =  \begin{pmatrix} \rad^2(A\otimes_0 B) \oplus (A \otimes \rad B) (\bar{A} \otimes B) & \rad(A\otimes_0 B) A \otimes \rad B + (A \otimes \rad B)(\bar{A} \otimes \rad B) \\ (\bar{A} \otimes B)\rad(A \otimes_0 B) + (\bar{A} \otimes \rad B)(\bar{A} \otimes B) & (\bar{A} \otimes B)(A \otimes \rad B) + \rad^2(\bar{A} \otimes B) \end{pmatrix}\\
& =  \begin{pmatrix} (\rad^2 A \otimes \bar{B}) \oplus (A \otimes \rad B) & (\rad A \otimes \rad B) \oplus (\bar{A} \otimes \rad^2 B) \\ \bar{A} \otimes \rad B  &  \bar{A} \otimes \rad B \end{pmatrix}.
\end{align*}}
Hence
\begin{align*}
\rad \Lambda / \rad^2 \Lambda  & =  \begin{pmatrix} (\rad A/ \rad^2 A) \otimes \bar{B} & \bar{A}  \otimes( \rad B / \rad^2 B) \\ \bar{A} \otimes \bar{B} & 0 \end{pmatrix}.
\end{align*}
It follows that the map
\[
\alpha^j \mapsto \begin{pmatrix} \alpha \otimes f_j & 0 \\ 0 & 0 \end{pmatrix}, \quad c(i,j)\mapsto\begin{pmatrix} 0 & 0 \\ e_i \otimes f_j & 0 \end{pmatrix}, \quad \beta^i \mapsto \begin{pmatrix} 0 & e_i \otimes \beta \\ 0 & 0\end{pmatrix} \quad (i\in Q_0,~j\in P_0)
\]
defines a $\bar{\Lambda}$-$\bar{\Lambda}$-bimodule isomorphism from the $k$-vector space spanned by the arrows of $\Gamma$ to $\rad(\Lambda)/\rad^2(\Lambda)$. This isomorphism then extends to a surjective algebra homomorphism
$$\tilde{\Phi}: k\Gamma \twoheadrightarrow \Lambda.$$

For any path $q =\alpha_1\alpha_2 \cdots \alpha_s$ of $Q$ and $j\in P_0$, $q^j:=\alpha_1^j \alpha_2^j \cdots \alpha_s^j$ is a path of $\Gamma$ and we have
$$\tilde\Phi(q^j) = \begin{pmatrix} q \otimes f_j & 0 \\ 0 & 0 \end{pmatrix}.$$
For any path $p =\beta_1 \beta_2 \cdots \beta_s$ of $P$ and $i \in Q_0$ we have
\begin{align*}
\tilde\Phi(c \beta_1^i c \beta_2^i c \cdots c \beta_s^i c ) & =  \begin{pmatrix} 0 & 0 \\ e_i \otimes \beta & 0  \end{pmatrix},\\
\tilde\Phi(c \beta_1^i c \beta_2^i c \cdots c \beta_s^i) & =  \begin{pmatrix} 0 & 0 \\ 0 & e_i \otimes \beta  \end{pmatrix},\\
\tilde\Phi(\beta_1^i c \beta_2^i c \cdots c \beta_s^i ) & =  \begin{pmatrix} 0 &  e_i \otimes \beta\\ 0 & 0  \end{pmatrix}.
\end{align*}
Therefore, the three types of relations in $\mathcal{I}$ belong to the kernel of $\tilde\Phi$: for $r\in\mathcal{R}$ and $j\in P_0$, $$\tilde{\Phi}(r^j) = \begin{pmatrix} r\otimes f_j & 0 \\ 0 & 0 \end{pmatrix} =0;$$
for $i\in Q_0$, $j\in P_0$ and $\alpha \in Q_1$ with $\mathrm{t}(a)=i$,
$$\tilde{\Phi}(c(i,j) \alpha^j) = \begin{pmatrix} 0 & 0 \\ e_i \otimes f_j & 0 \end{pmatrix} \begin{pmatrix} \alpha \otimes f_j & 0 \\ 0 & 0 \end{pmatrix} = \begin{pmatrix} 0 & 0 \\ \alpha \otimes f_j & 0 \end{pmatrix} =0;$$
for $s = \sum \mu_{j_1, \ldots, j_t} \beta_{j_1} \cdots \beta_{j_t} \in \mathcal{S}$ and $i \in Q_0$,
\begin{align*}
\tilde{\Phi}(s^i) & =  \tilde{\Phi}(\sum \mu_{j_1,\ldots,j_t} \beta_{j_1}^i c\beta_{j_2}^i c \cdots \beta_{j_{t-1}}^i c \beta_{j_t}^i)\\
& =   \sum \mu_{j_1,\ldots,j_t} \begin{pmatrix} 0 & e_i \otimes \beta_{j_1}\beta_{j_2} \cdots \beta_{j_t} \\ 0 & 0 \end{pmatrix} \\
& =  \begin{pmatrix} 0 & e_i\ten s \\ 0 & 0 \end{pmatrix} = 0.
\end{align*}
Hence $\tilde\Phi$ factors through a surjective algebra homomorphism
$$\Phi: \tilde{\Lambda} = k\Gamma /(\mathcal{I}) \twoheadrightarrow \Lambda.$$
We will show that $\Phi$ is bijective and thus an algebra isomorphism.

By abuse of notation we denote by $e_1$ the sum of all the idempotents $e_{i^j}$'s and $e_2$ the sum of all $e_{j^i}$'s in $\tilde\Lambda$. Then $\Phi(e_1) = e_1$ and $\Phi(e_2) = e_2$. Since $e_1$ and $e_2$ are orthogonal to each other, both algebras $\tilde\Lambda$ and $\Lambda$ decompose as $2\times 2$-matrix algebras and the decomposition is compatible with $\Phi$. We write $\Phi_{ij}$ for the induced map from $e_i \tilde\Lambda e_j$ to $e_i \Lambda e_j$ ($i,j = 1,2$). Then $\Phi = \bigoplus_{i,j=1}^{2} \Phi_{ij}$. The bijectivity of $\Phi$ is equivalent to the bijectivity of all $\Phi_{ij}$. 

Bijectivity of $\Phi_{11}: e_1 \tilde\Lambda e_1 \ra e_1 \Lambda e_1 = A \otimes_0 B$: The trivially twisted tensor product $A\otimes _0 B$ as  a $k$-algebra is generated by $e_i\otimes f_j$, $\alpha\otimes f_j$ and $e_i\otimes \beta$ for $i\in Q_0$, $j\in P_0$, $\alpha\in Q_1$ and $\beta\in P_1$, with generating relations $r\otimes f_j$ for $r\in\mathcal{R}$, $e_i\otimes s$ for $s\in \mathcal{S}$ and $(e_i \otimes \beta)(\alpha \otimes f_j)$. The preimages of $\alpha \otimes f_j$ and $e_i\otimes \beta$ in $e_1\tilde\Lambda e_1$ are $\alpha^j$ and $\beta^i c$ respectively. They also satisfy the generating relations of $A \otimes_0 B$. Indeed for $r=\sum \lambda_{i_1,\ldots, i_s} \alpha_{i_1} \cdots \alpha_{i_s} \in \mathcal{R}$, $r^j = \sum \lambda_{i_1,\ldots, i_s} \alpha_{i_1}^j \cdots \alpha_{i_s}^j=0$. For $s = \sum \mu_{j_1, \ldots, j_t} \beta_{j_1} \cdots \beta_{j_t} \in \mathcal{S}$, $ \sum \mu_{j_1,\ldots,j_t} \beta_{j_1}^i c \beta_{j_2}^i c \cdots \beta_{j_t}^i c =s^i c = 0$. For $i \in Q_0$, $j\in P_0$, $\alpha \in Q_1$ and $\beta \in P_1$, we have $\beta^i c \alpha^j =0$. Note that $e_1 \tilde\Lambda e_1$ as a $k$-algebra is generated by $e_{i^j}$, $\alpha^j$ and $\beta^ic$. Therefore $\Phi_{11}$ must be bijective.

Bijectivity of $\Phi_{22}: e_2 \tilde\Lambda e_2 \ra e_2 \Lambda e_2 = \bar{A} \otimes B$. The tensor product $\bar{A} \otimes B$ as a $k$-algebra is generated by $e_i \otimes f_j$ and $e_i \otimes \beta$ for $i\in Q_0$ and $\beta \in P_1$ with generating relations $e_i \otimes s$ for $s\in \mathcal{S}$. The preimages of $e_i \otimes f_j$ and $e_i \otimes \beta$ in $e_2\tilde\Lambda e_2$ are $e_{j^i}$ and $c\beta^i$ respectively. They satisfy the generating relations of $\bar{A} \otimes B$. Indeed for $s = \sum \mu_{j_1, \ldots, j_t} \beta_{j_1} \cdots \beta_{j_t} \in \mathcal{S}$, $c s^i = \mu_{j_1,\ldots,j_t} c \beta_{j_1}^i c \beta_{j_2}^i c \cdots \beta_{j_t}^i = 0$. An a $k$-algebra $e_2\tilde\Lambda e_2$ is generated by $e_{j^i}$ and $c\beta^i$, so $\Phi_{12}$ must be bijective.

Bijectivity of $\Phi_{21}: e_2 \tilde\Lambda e_1 \ra e_2 \Lambda e_1 = \bar{A} \otimes B$. For $i \in Q_0$, $j\in P_0$ and $\beta \in P_1$, the preimages of $e_i \otimes f_j$ and $e_i \otimes \beta$ are $c(i,j)$ and $c\beta^i c$ respectively. Consider the right multiplication map $r_c$ from $e_2\tilde\Lambda  e_2$ to $e_2\tilde\Lambda e_1$, sending an element $x$ to $xc$. This map is surjective according to the shape of $\Gamma$ and the description of $\ci$. Hence $\dim_k e_2 \tilde\Lambda e_1 \leq \dim_k e_2 \tilde\Lambda e_2 = \dim_k e_2 \Lambda e_2  = \dim_k e_2 \Lambda e_1$. But the surjectivity of $\Phi_{21}$ implies $\dim_k e_2 \tilde\Lambda e_1 \geq \dim_k e_2  \Lambda e_1$. Hence $e_2 \tilde\Lambda e_1$ and  $e_2 \Lambda e_1$ have the same dimension and $\Phi_{21}$ is bijective.

Finally for the bijectivity of $\Phi_{12}: e_1 \tilde\Lambda e_2 \ra e_1 \Lambda e_2 = A \otimes \rad B$, consider for each $c(i,j)$ the right multiplication map $r_{c(i,j)}: \tilde\Lambda e_{j^i} \ra \tilde\Lambda e_{i^j}$, and the sum $r_c = \sum r_{c(i,j)}: \tilde\Lambda e_2 \ra \tilde\Lambda e_1$. By Lemma \ref{l:injectivity} below, each $r_{c(i,j)}$ is injective, so is $r_c$. Let $r_c^1: e_1 \tilde\Lambda e_2 \ra e_1 \tilde\Lambda e_1$ be the corresponding summand of $r_c$. It follows that $r_c^1$ is injective. The following commutative diagram
\[
\xymatrix@C=0.7pc{
  e_1\tilde\Lambda e_2 \ar[d]_{\Phi_{12}} \ar[rrrrrr]^{r_c^1} &&&&&& e_1\tilde\Lambda e_1 \ar[d]^{\Phi_{11}} \\
  e_1 \Lambda e_2 \ar@{=}[r] & A \otimes \rad B \ar[rrrr]^{\mathrm{inc}} &&&& A \otimes_0 B\ar@{=}[r] & e_1\Lambda e_1 }
\]
shows that $\Phi_{12}$ is injective, and hence bijective.
\end{proof}

\begin{lemma} Let $A=kQ/(\mathcal{R})$ where $Q$ is a finite quiver and $\mathcal{R}$ is a set of minimal relations. Suppose $c: \mathrm{s}(c) \ra \mathrm{t}(c)$ is a path in $Q$ such that any path appearing in $\mathcal{R}$ is not of the form $pc$ for some path $p$ in $Q$. Then the right multiplication map $r_c: A e_{\mathrm{t}(c)} \ra A e_{\mathrm{s}(c)},~~x\mapsto xc$, is injective. \label{l:injectivity}
\end{lemma}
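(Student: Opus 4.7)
The plan is to work inside $kQ$, set $I=(\mathcal{R})$ so $A=kQ/I$, and reduce the lemma to the following assertion in $kQ$: if $y\in kQ\,e_{\mathrm{t}(c)}$ satisfies $yc\in I$, then $y\in I$. Passing to the quotient $A$ then gives the injectivity of $r_c$.

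The first observation is that in $kQ$ itself the map $y\mapsto yc$ is already injective on $kQ\,e_{\mathrm{t}(c)}$, since distinct paths with source $\mathrm{t}(c)$ produce distinct paths after right concatenation with $c$. To transfer this injectivity to $A$, I would construct a $k$-linear left inverse: define a truncation $\pi_c\colon kQ\,e_{\mathrm{s}(c)}\to kQ\,e_{\mathrm{t}(c)}$ on the path basis by $\pi_c(q)=p$ if $q=pc$ (with $p$ the unique such path), and $\pi_c(q)=0$ otherwise. Then $\pi_c(zc)=z$ for every $z\in kQ\,e_{\mathrm{t}(c)}$, and the whole problem reduces to proving the key inclusion $\pi_c(I\cap kQ\,e_{\mathrm{s}(c)})\subseteq I$; granted this, applying $\pi_c$ to $yc\in I$ immediately yields $y=\pi_c(yc)\in I$.

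To prove this inclusion, I would take a typical generator $arb$ of $I$ (with $r\in\mathcal{R}$ and $a,b$ paths, $\mathrm{s}(b)=\mathrm{s}(c)$), expand $r=\sum\lambda_iq_i$ into paths, and split on whether $b$ ends with $c$ in written form. If $b=b'c$ for some path $b'$, then every basis path $aq_ib=aq_ib'c$ ends in $c$, and $\pi_c$ strips off the trailing $c$ uniformly, producing $\pi_c(arb)=arb'\in I$. Otherwise, the goal is to argue that no path $aq_ib$ can end with $c$, so that $\pi_c(arb)=0$.

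The main obstacle I anticipate is this last case: one must rule out a trailing occurrence of $c$ in $aq_ib$ that is realized partly inside $q_i$ and partly inside $b$; the hypothesis that no path in $\mathcal{R}$ has the form $pc$ is exactly what should drive this. In the intended application (the proof of Theorem~\ref{thm:quiver-with-relations}), where $c$ is one of the single arrows $c(i,j)$ of $\Gamma$, the argument is transparent: the last arrow of $aq_ib$ is either the last arrow of a nontrivial $b$ (not $c$, by assumption on $b$) or the last arrow of $q_i$ when $b$ is a vertex (not $c$, since no $q_i\in\mathcal{R}$ is of the form $pc$). For a general $c$, the same case analysis, now bookkeeping over prefixes of $c$ in written form, is the technical heart of the argument and is where the combinatorial hypothesis on $\mathcal{R}$ must be exploited with care.
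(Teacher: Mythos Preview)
Your approach mirrors the paper's almost exactly: both lift the question to $kQ$, reduce injectivity of $r_c$ to the statement that $yc\in I$ forces $y\in I$ (the paper phrases this as $kQ\,c\cap I e_{\mathrm{s}(c)}=Ic$ via a short diagram chase), and then analyse a typical element $\sum a_1 r a_2$ of $I$ by separating the right factors $a_2$ that end in $c$ from those that do not. The paper simply asserts that the contribution from the latter ``must be zero'', which is exactly the point you single out as the main obstacle.

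Your hesitation there is justified, and in fact decisive: for a general path $c$ the lemma as stated is \emph{false}, so neither your argument nor the paper's can be completed in that generality. Take $Q$ with vertices $1,2$ and arrows $\alpha\colon 1\to 2$, $\beta\colon 2\to 1$, set $\mathcal{R}=\{\beta\alpha\beta\}$ and $c=\alpha\beta\alpha$. The unique path in $\mathcal{R}$ is $\beta\alpha\beta$, which is not of the form $pc$ (it has the wrong source), so the hypothesis holds; yet in $A=kQ/(\mathcal{R})$ one computes $r_c(\beta)=\beta\cdot\alpha\beta\alpha=(\beta\alpha\beta)\alpha=0$ while $\beta\neq 0$, so $r_c$ is not injective. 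What rescues both arguments is that the lemma is only ever invoked with $c$ a single arrow (the arrows $c(i,j)$ in the proof of Theorem~\ref{thm:quiver-with-relations}); in that case your last-arrow analysis---the last arrow of $aq_ib$ is that of $b$ if $b$ is nontrivial, and that of $q_i$ otherwise---is complete and correct, and so is the paper's parallel claim. It would be worth restricting the statement of the lemma to arrows.
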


\begin{proof}
Put $\tilde{A}=kQ$ and $I=(\mathcal{R})$. We have the following commutative diagram
\[\xymatrix{
  0 \ar[r] & I e_{\mathrm{t}(c)} \ar[d]_{\alpha} \ar[r] & \tilde{A} e_{\mathrm{t}(c)} \ar[d]_{\beta} \ar[r] &  A e_{\mathrm{t}(c)} \ar[d]_{r_c} \ar[r] & 0 \\
  0 \ar[r] & I e_{\mathrm{s}(c)} \ar[r] & \tilde{A} e_{\mathrm{s}(c)} \ar[r] & A e_{\mathrm{s}(c)} \ar[r] & 0   }\]
where the rows are exact and all the vertical maps are right multiplication with $c$. It is clear that $\alpha$ and $\beta$ are injective. Hence $r_c$ is injective if and only if $\Im(\beta) \cap I e_{\mathrm{s}(c)} = \Im(\alpha)$, \ie $\tilde{A} c \cap I e_{\mathrm{s}(c)} = Ic$ in $\tilde {A}$. It suffices to show `$\subseteq$', \ie if $x\in\tilde{A}$ satisfies $xc \in I e_{\mathrm{s}(c)}$, then $x\in I$. Write $xc = \sum_{r\in\mathcal{R}} a_1 r a_2$ with $a_1, a_2 \in \tilde{A}$, and write further $a_2 = a_2' + a_2''$ such that all paths appearing in $a_2'$ are of the form $pc$  for some path $p$ in $\tilde{A}$, and none of the paths appearing in $a_2''$ is of such form. Then $xc = \sum_{r\in \mathcal{R}} a_1 r a_2' + \sum_{r\in\mathcal{R}} a_1 r a_2''$ in $\tilde{A}$. The first summand is of the form $x'c$ for some $x' \in I$ and the second summand must be zero. Hence $xc = x'c$, namely $\beta(x) = \beta(x')$. It follows from the injectivity of $\beta$ that $x=x'\in I$.
\end{proof}


\end{document}